\documentclass{amsart}
\usepackage{amsmath,amssymb,amsthm}
\usepackage{booktabs}
\usepackage[usenames,dvipsnames]{color}
\usepackage{comment}
\usepackage{graphicx}
\usepackage{latexsym}
\usepackage[latin1]{inputenc}
\usepackage{mathtools}
\usepackage[shortlabels]{enumitem}
\usepackage{thmtools}
\usepackage{url}
\usepackage[all,cmtip]{xy}
\usepackage[pagebackref]{hyperref} 



\title{Structure and enumeration of $(\mathbf{3}+\mathbf{1})$-free posets}

\author{Mathieu Guay-Paquet}
\address{
LaCIM \\
Universit\'e du Qu\'ebec \`a Montr\'eal \\
201 Pr\'esident-Kennedy \\
Montr\'eal QC\ \ H2X~3Y7 \\
Canada}
\thanks{MGP was supported by an NSERC Postdoctoral Fellowship}
\email{mathieu.guaypaquet@lacim.ca}

\author{Alejandro H. Morales}
\thanks{AHM was supported by a CRM-ISM Postdoctoral Fellowship}
\email{ahmorales@lacim.ca}

\author{Eric Rowland}



\definecolor{darkgreen}{rgb}{0,0.7,0}
\definecolor{purplish}{rgb}{0.5,0,0.8}

\hypersetup{
  breaklinks,colorlinks,citecolor=darkgreen,linkcolor=purplish, 
  pdftitle={Structure and enumeration of (3+1)-free posets},
  pdfauthor={Mathieu Guay-Paquet, Alejandro H. Morales, and Eric Rowland},
}


\newcommand{\NN}{\mathbb{N}}
\newcommand{\id}{\mathrm{id}}

\newcommand{\bl}{b_{\textnormal{lbl}}}
\newcommand{\bu}{b_{\textnormal{unl}}}
\newcommand{\Bl}{B_{\textnormal{lbl}}}
\newcommand{\Bu}{B_{\textnormal{unl}}}
\newcommand{\pl}{p_{\textnormal{lbl}}}
\newcommand{\pu}{p_{\textnormal{unl}}}
\newcommand{\Hl}{H_{\textnormal{lbl}}}
\newcommand{\Hu}{H_{\textnormal{unl}}}

\newcommand{\Cl}{C_{\textnormal{lbl}}}
\newcommand{\Cu}{C_{\textnormal{unl}}}
\newcommand{\Tl}{T_{\textnormal{lbl}}}
\newcommand{\Tu}{T_{\textnormal{unl}}}
\newcommand{\Pl}{P_{\textnormal{lbl}}}
\newcommand{\Pu}{P_{\textnormal{unl}}}

\newcommand{\skel}{\mathcal{S}}

\newcommand{\aut}{\mathop{\mathrm{Aut}}}
\newcommand{\abs}[1]{\left|{#1}\right|}

\newcommand{\tpo}{\texorpdfstring{$(\mathbf{3}+\mathbf{1})$}{(3+1)}}
\newcommand{\tpt}{\texorpdfstring{$(\mathbf{2}+\mathbf{2})$}{(2+2)}}
\newcommand{\pathto}{\twoheadrightarrow}


\declaretheorem[numberwithin=section]{theorem}
\declaretheorem[numberlike=theorem]{lemma}
\declaretheorem[numberlike=theorem]{proposition}

\declaretheorem[numberlike=theorem, style=definition]{definition}
\declaretheorem[numberlike=theorem, style=definition]{remark}
\declaretheorem[numberlike=theorem, style=definition]{example}
\declaretheorem[numberlike=theorem, style=definition]{construction}
\declaretheorem[numbered=no, style=definition]{outline}

\numberwithin{equation}{section} 


\begin{document}

\begin{abstract}
A poset is \tpo-free if it does not contain the disjoint union of chains of length 3 and 1 as an induced subposet.
These posets play a central role in the \tpo-free conjecture of Stanley and Stembridge.
Lewis and Zhang have enumerated \tpo-free posets in the graded case by decomposing them into bipartite graphs, but until now the general enumeration problem has remained open.
We give a finer decomposition into bipartite graphs which applies to all \tpo-free posets and obtain generating functions which count \tpo-free posets with labelled or unlabelled vertices.
Using this decomposition, we obtain a decomposition of the automorphism
group and asymptotics for the number of \tpo-free posets.
\end{abstract}

\maketitle

\section{Introduction}\label{sec:intro}

A poset $P$ is \emph{$(\mathbf{i}+\mathbf{j})$-free} if it contains no induced subposet that is isomorphic to the poset consisting of two disjoint chains of lengths $i$ and $j$.
In particular, $P$ is \tpo-free if there are no vertices $x, y, z, w \in P$ such that $x < y < z$ and $w$ is incomparable to $x$, $y$, and $z$.

Posets that are \tpo-free play a role in the study of
Stanley's chromatic symmetric function~\cite{St1, St2}, a symmetric
function associated with a poset that generalizes the chromatic
polynomial of a graph.
Namely, a well-known conjecture of Stanley and Stembridge~\cite{StSt} is that the chromatic symmetric function of a \tpo-free poset has positive coefficients in the basis of elementary symmetric functions. As evidence toward this conjecture, Stanley~\cite{St1} verified the conjecture for the class of $\mathbf{3}$-free posets, and Gasharov~\cite{G2} has shown the weaker result that the chromatic symmetric function of a \tpo-free poset is Schur-positive.

To make more progress toward the Stanley--Stembridge conjecture, a
better understanding of \tpo-free posets is needed. Skandera and Reed~\cite{Sk1, SkR} have given structural results and a
characterization of \tpo-free posets in terms of their antiadjacency matrix.
In addition, certain families of \tpo-free posets have been
enumerated. For example, the number of \tpo-and-\tpt-free
posets with $n$ vertices is the $n$th Catalan number~\cite[Ex.~6.19(ddd)]{EC2};
Atkinson, Sagan and Vatter~\cite{ASV} have enumerated the permutations
that avoid the patterns $2341$ and $4123$, which give rise to the
\tpo-free posets of dimension two; and Lewis and
Zhang~\cite{LZ} have made significant progress by enumerating
\emph{strongly graded} \tpo-free posets in terms of bicoloured graphs
\footnote{We use the term \emph{bicoloured} rather than the term \emph{bipartite} to emphasize the fact that a 2-colouring is not only possible, but actually given and fixed; in particular, there is only one bipartite graph with one vertex, but there are two bicoloured graphs with one vertex.}
using a new structural decomposition.
However, until now the general enumeration problem for \tpo-free posets has remained open~\cite[Ex.~3.16(b)]{EC1}.

In this paper, we give generating functions for
\tpo-free posets with unlabelled and labelled vertices in terms
of the generating functions for bicoloured graphs with unlabelled and
labelled vertices, respectively. As in the strongly graded case, the two problems are equally hard, although the enumeration problem for bicoloured graphs has received more attention.

In the unlabelled case, let $\pu(n)$ be the number of \tpo-free posets with $n$ unlabelled vertices, and let $S(c,t)$ be the unique formal power series solution (in $c$ and $t$) of the cubic equation
\begin{equation}\label{recurrTandC}
  S(c,t) = 1 + \frac{c}{1+c}S(c,t)^2 + tS(c,t)^3.
\end{equation}
We show that the ordinary generating function for unlabelled \tpo-free posets is
\begin{equation}\label{ordgs}
  \sum_{n\geq 0} \pu(n) x^n
    = S\big(x/(1-x), 1-2x-\Bu(x)^{-1}\big),
\end{equation}
where $\Bu(x) = 1 + 2x + 4x^2 + 8x^3 + 17x^4 + \cdots$ is the ordinary generating function for unlabelled bicoloured graphs. Before our investigation, the On-Line Encyclopedia of Integer Sequences~\cite{OEIS} had 22 terms in the entry~\cite[\href{http://oeis.org/A049312}{A049312}]{OEIS} for the coefficients of $\Bu(x)$, but only 7 terms in the entry~\cite[\href{http://oeis.org/A079146}{A079146}]{OEIS} for the numbers $\pu(n)$. Using~\eqref{ordgs}, we have closed this gap; the numbers $\pu(n)$ for $n = 0, 1, 2, \ldots, 22$ are
\begin{quote}
1, 1, 2, 5, 15, 49, 173, 639, 2469, 9997, 43109, 205092, 1153646,
8523086, 91156133, 1446766659, 32998508358, 1047766596136,
\linebreak
45632564217917, 2711308588849394, 219364550983697100,
\linebreak
24151476334929009951, 3618445112608409433287.
\end{quote}

Similarly, in the labelled case, let $\pl(n)$ be the number of \tpo-free posets with $n$ labelled vertices. We show that the exponential generating function for labelled \tpo-free posets is
\begin{equation}\label{expgs}
  \sum_{n\geq 0} \pl(n) \frac{x^n}{n!}
    = S\big(e^x-1, 2e^{-x} -1 - \Bl(x)^{-1}\big),
\end{equation}
where $\Bl(x) = \sum_{n\geq 0} \sum_{i=0}^n \binom{n}{i} 2^{i(n-i)} \frac{x^{n}}{n!}$ is the exponential generating function for labelled bicoloured graphs. Such bicoloured graphs are easy to count, but before our investigation the OEIS had only 9 terms in the entry~\cite[\href{http://oeis.org/A079145}{A079145}]{OEIS} for $\pl(n)$. Using~\eqref{expgs}, arbitrarily many terms $\pl(n)$ can be computed.

Our main tool is a new decomposition of \tpo-free posets called the \emph{canonical partition} into blocks called \emph{clone sets} and \emph{tangles}, with the relations between blocks given by a \emph{skeleton}.
This partition is compatible with the automorphism group, in the sense that for a \tpo-free poset $P$, $\aut(P)$ breaks up as the direct product of the automorphism group of each block.
This decomposition also generalizes a decomposition of Skandera
and Reed~\cite{SkR} for \tpo-and-\tpt-free posets given by \emph{altitudes} of vertices.
In terms of generating functions, the restriction of our results to \tpo-and-\tpt-free posets corresponds to the specialization $t=0$ in~\eqref{recurrTandC}. Indeed, one can see that $S(x/(1-x),0)$ satisfies the functional equation for the Catalan generating function, as expected.

\begin{remark}
Using the notions of skeleta, clone sets and tangles, it is possible to quickly generate all \tpo-free posets of a given size up to isomorphism in a straightforward way. With this approach, we were able to list all \tpo-free posets on up to 11 vertices in a few minutes on modest hardware.
Note that this technique can accommodate the generation of interesting
subclasses of \tpo-free posets (\textit{e.g.}, \tpt-free, weakly
graded, strongly graded, co-connected, fixed number of levels) or
constructing these posets from the bottom up, level by level (which
can help compute invariants like the chromatic symmetric function, as
in \cite{GP}).
\end{remark}

\begin{remark}
Comparing the list of numbers above with data provided by Joel Brewster
Lewis for the number of strongly graded \tpo-free posets~\cite[\href{http://oeis.org/A222863}{A222863}, \href{http://oeis.org/A222865}{A222865}]{OEIS}, it appears that, asymptotically,
almost all \tpo-free posets are strongly graded. We prove this in \autoref{sec:asympt}, building
on the asymptotic analysis of Lewis and Zhang for the strongly graded
\tpo-free posets.
In fact, almost all \tpo-free posets are $\mathbf{3}$-free, so their Hasse diagrams are bicoloured graphs.
Since Stanley~\cite{St1} verified the Stanley-Stembridge conjecture for the class of $\mathbf{3}$-free posets it follows that this conjecture is true for almost all \tpo-free posets.
\end{remark}

\begin{outline}
In \autoref{sec:structure}, we carefully construct some equivalent data structures for \tpo-free posets (auxiliary digraphs, fleshed out skeleta) in order to define the canonical partition and decompose the automorphism group for these posets.
In \autoref{sec:enumeration}, we carry out the enumeration of \tpo-free posets by computing generating functions for clone sets, tangles and skeleta in terms of the generating function for bicoloured graphs.
In \autoref{sec:asympt}, we give asymptotics for the number of \tpo-free posets and compare them to other related classes of posets.
Our main theorems are \autoref{thm:bijection}, \autoref{thm:automorphism}, \autoref{thm:enum} and \autoref{thm:asymptP}.
\end{outline}

An extended abstract of this article appeared as \cite{GMRProc}.

\section{Structure}\label{sec:structure}

Our goal is to study the structure of \tpo-free posets, and our strategy will be to take the original definition in terms of an order relation on a set of vertices, and to progressively rephrase it first in terms of an auxiliary digraph structure, then in terms of a canonical partition of the vertex set together with a dependence graph on its blocks. At each step, we carefully define the structures we are using, and show through bijections that they represent the initial \tpo-free posets faithfully.

\subsection{\tpo-free posets}

Let us start a definition and some basic properties.

\begin{definition}[\tpo-free poset]
  A \emph{\tpo-free poset} $P = (V, <)$ consists of a (finite) set $V$ of \emph{vertices}, together with an order relation $<$ on $V$ which does not induce a copy of the \tpo{} poset as a subposet. Equivalently, there does not exist vertices $x, y, z, w \in V$ such that $x < y < z$ with $w$ incomparable to each of $x, y, z$.
\end{definition}

In general, \tpo-free posets are not graded, so we can't speak of the rank of a given vertex in such a poset. However, it will be useful to still have some notion of `how high up' a given vertex is.

\begin{definition}[level function]
  The \emph{level function} for a \tpo-free poset $P = (V, <)$ is the function $\ell \colon V \to \NN$ defined recursively by
  \[
    \ell(x) = \begin{cases}
      0 &\text{if $x$ is a minimal element,} \\
      \max_{y < x} \ell(y) + 1 &\text{otherwise.}
    \end{cases}
  \]
  The sets $\ell^{-1}(\{k\})$ for $k = 0, 1, 2, \ldots$ partition the vertex set into \emph{levels}.
\end{definition}

\begin{figure}[b]
\includegraphics{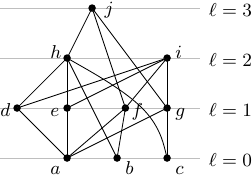}
\qquad \quad
\includegraphics{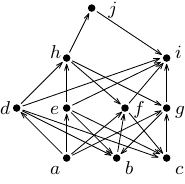}
\caption{Left: the Hasse diagram of a \tpo-free poset $P$ with 10 vertices, with the levels indicated. Right: the constructed auxiliary digraph $A$ (applying \autoref{cons:P-to-A}).}
\label{fig:poset-auxgraph}
\end{figure}

\begin{example}
  The left side of \autoref{fig:poset-auxgraph} shows a \tpo-free poset $P$ on 10 vertices, with levels indicated.
\end{example}

The next proposition characterizes the possible level functions of \tpo-free posets.

\begin{proposition}\label{prop:level-function}
  Given a finite set $V$ of vertices, a function $\ell \colon V \to \NN$ is the level function for \emph{some} \tpo-free poset $P = (V, <)$ iff for every vertex $x \in V$ with $\ell(x) > 0$, there exists a vertex $y \in V$ with $\ell(y) = \ell(x) - 1$.
\end{proposition}

\begin{proof}
  Suppose $\ell \colon V \to \NN$ is the level function for a poset $P = (V, <)$. If $x \in V$ is a vertex with $\ell(x) > 0$, the maximum in the definition of $\ell(x)$ must be reached by some $y \in V$, which satisfies $\ell(y) = \ell(x) - 1$.

  Conversely, if $\ell \colon V \to \NN$ is a function such that for every vertex $x \in V$ with $\ell(x) > 0$, there exists a vertex $y \in V$ with $\ell(y) = \ell(x) - 1$, then we can define a \tpo-free poset $P = (V, <)$ by letting $x < y$ iff $\ell(x) < \ell(y)$. This $P$ is in fact a $(\mathbf{2}+\mathbf{1})$-free poset, since any two vertices are comparable unless they are on the same level, and the given function $\ell$ satisfies the definition of level function for this poset.
\end{proof}

\subsection{Auxiliary digraphs}

Although the order relation $<$ gives us some information about every pair of vertices, the condition of being \tpo-free imposes strong constraints, so that this information is redundant for most pairs of vertices if the level function $\ell$ is known, as shown by the following proposition.

\begin{proposition}
  Let $x, y \in V$ be two vertices of a \tpo-free poset $P = (V, <)$ with level function $\ell$. Then, $x < y$ implies $\ell(y) \geq \ell(x) + 1$, and $\ell(y) \geq \ell(x) + 2$ implies $x < y$.
\end{proposition}

\begin{proof}
  The first implication follows directly from the definition of the level function. For the second implication, suppose $\ell(y) \geq \ell(x) + 2$. The definition of the level function guarantees the existence of vertices $a, b \in V$ with $a < b < y$ and $\ell(b) = \ell(y) - 1$, $\ell(a) = \ell(b) - 1$. Since $P$ is \tpo-free, the vertex $x$ must be comparable to at least one of $a, b, y$, and by level considerations, it cannot be greater than any of these vertices, so we must have $x < y$.
\end{proof}

\begin{remark}
  Lewis and Zhang~\cite[Theorem~3.1]{LZ} have a version of this proposition for strongly graded \tpo-free posets. The proofs are essentially the same.
\end{remark}

\begin{remark}
  Note that the covering relations of $P$ include all relations $x < y$ with $\ell(y) = \ell(x) + 1$, but they may also include relations with $\ell(y) = \ell(x) + 2$.
\end{remark}

\begin{example}
  For the poset $P$ depicted in \autoref{fig:poset-auxgraph}, the relations $b < h$, $c < h$, $f < j$ and $g < j$ are covering relations even though they relate elements which are not on adjacent levels.
\end{example}

To factor out this redundancy, we use the notion of an auxiliary digraph, which only records information about pairs of vertices that are on adjacent levels. There are many possible ways to represent the same information, but we choose digraphs because the language and tools of directed cycles will be useful later.

\begin{definition}[auxiliary digraph]\label{def:auxiliary}
  An \emph{auxiliary digraph} $A = (V, \ell, E)$ consists of a (finite) set $V$ of vertices, together with a level function $\ell$ (in the sense of \autoref{prop:level-function}) on $V$, and a set $E \subseteq V^2$ of directed \emph{edges}, denoted $x \to y$ if $(x, y) \in E$, such that:
  \begin{enumerate}[({A}1)]
    \item\label{item:a-edges} there is an edge between vertices $x$ and $y$ (that is, either $x \to y$ or $y \to x$, but not both) iff the vertices are on adjacent levels (that is, $\ell(y) = \ell(x) \pm 1$);
    \item\label{item:a-initial} if $\ell(x) > 0$, then there exists a vertex $y$ with $\ell(y) = \ell(x) - 1$ and $y \to x$; and
    \item\label{item:a-notall} there are no \emph{tall 4-cycles}, that is, vertices $x, y, z, w \in V$ not contained in a pair of adjacent levels such that $x \to y \to z \to w \to x$.
  \end{enumerate}
\end{definition}

There are many details to check, but the constructions used to translate between the data $P = (V, <)$ of a \tpo-free poset and the data $A = (V, \ell, E)$ of an auxiliary digraph are fairly straightforward.

\begin{construction}\label{cons:P-to-A}
  Given a \tpo-free poset $P = (V, <)$, we can construct an auxiliary digraph $A = (V, \ell, E)$ as follows:
  \begin{enumerate}[i., font=\itshape]
    \item keep the same vertex set $V$;
    \item take $\ell$ to be the level function for $P$; and
    \item for every pair of vertices $x, y \in V$ with $\ell(y) = \ell(x) + 1$, either let $x \to y$ if $x < y$, or let $y \to x$ if $x \not< y$.
  \end{enumerate}
\end{construction}

\begin{example}
  Starting from the poset $P$ on the left side of \autoref{fig:poset-auxgraph}, we can obtain the auxiliary digraph $A$ on the right side by keeping each vertex on the same level; putting a complete bipartite graph between each pair of adjacent levels; and orienting each edge upwards when it appears as a relation in the Hasse diagram (so that the relation $h < j$ becomes the edge $h \to j$), or downwards when it does not appear as a relation in the Hasse diagram (so that the non-relation $i \not< j$ becomes the edge $j \to i$).
\end{example}

\begin{proof}
  Let us check that the construction satisfies the defining properties of an auxiliary digraph.
  Condition~\ref{item:a-edges} clearly holds.
  For Condition~\ref{item:a-initial}, for any vertex $x \in V$ with $\ell(x) > 0$, the definition of $\ell(x)$ guarantees the existence of a $y < x$ with $\ell(y) = \ell(x) - 1$, and for this choice of $y$ we have $y \to x$.
  To check Condition~\ref{item:a-notall}, note that any four vertices which form a tall 4-cycle in the construction of $A$ would necessarily form an induced copy of the \tpo{} poset in the original poset $P$, so this is ruled out.
\end{proof}

\begin{construction}\label{cons:A-to-P}
  Given an auxiliary digraph $A = (V, \ell, E)$, we can construct a \tpo-free poset $P = (V, <)$ as follows:
  \begin{enumerate}[i., font=\itshape]
    \item keep the same vertex set $V$; and
    \item for every pair of vertices $x, y \in V$, let $x < y$ iff either $\ell(y) = \ell(x) + 1$ and $x \to y$, or $\ell(y) \geq \ell(x) + 2$.
  \end{enumerate}
\end{construction}

\begin{example}
  To reverse the construction, starting from the auxiliary digraph $A$ on the right side of \autoref{fig:poset-auxgraph}, we can obtain the \tpo-free poset $P$ on the left side by putting in all relations for vertices that are at least two levels apart (such as $b < h$, $b < i$, $b < j$); and putting in relations for every edge which is oriented upwards (so that $h \to j$ becomes $h < j$, but $j \to i$ is ignored and $i \not< j$).
\end{example}

\begin{proof}
  Let us check that the constructed $P$ is indeed a \tpo-free poset.
  Since $x < y$ implies $\ell(y) \geq \ell(x) + 1$, and $\ell(y) \geq \ell(x) + 2$ implies $x < y$, the constructed relation $<$ is irreflexive and transitive, so it does define a poset structure.
  To check \tpo-freedom, suppose $w \in V$ is a vertex and the vertices $x, y, z \in V$ are incomparable to it. Then, we must have $\{\ell(x), \ell(y), \ell(z)\} \subseteq \{\ell(w)-1, \ell(w), \ell(w)+1\}$ and we could only have $x < y < z$ if the vertices $x, y, z, w$ formed a tall 4-cycle in the original auxiliary digraph $A$. This is ruled out by definition, so the poset $P$ is \tpo-free.
\end{proof}

\begin{proposition}\label{prop:bijection1}
  \autoref{cons:P-to-A} and \autoref{cons:A-to-P} are inverses of each other. Hence, they establish a bijection between \tpo-free posets and auxiliary digraphs.
\end{proposition}

\begin{proof}
  For the first direction (applying \autoref{cons:P-to-A} first, then \autoref{cons:A-to-P}), we need to show that the relation $<$ is preserved.

  Let $P_0 = (V, <)$ be the original \tpo-free poset, $A = (V, \ell, E)$ the constructed auxiliary digraph, and $P_1 = (V, \prec)$ the constructed \tpo-free poset. Let $x, y \in V$ be two vertices, and without loss of generality, assume $\ell(y) \geq \ell(x)$. If $\ell(y) = \ell(x)$, then $x$ and $y$ are incomparable in $P_0$ and in $P_1$. If $\ell(y) \geq \ell(x) + 2$, then $x < y$ and $x \prec y$. If $\ell(y) = \ell(x) + 1$ and $x < y$, then $x \to y$ in the auxiliary digraph $A$, and $x \prec y$. The only remaining case is $\ell(y) = \ell(x) + 1$ with $x$ and $y$ incomparable in $P_0$; then, $x \not\to y$ in $A$, and $x$ and $y$ are incomparable in $P_1$ as well. Thus, $P_0 = P_1$.

  For the other direction (applying \autoref{cons:A-to-P} first, then \autoref{cons:P-to-A}), we need to show that the level function $\ell$ is preserved, and that the edges $x \to y$ are preserved.

  Let $A_0 = (V, \ell_0, E_0)$ be the original auxiliary digraph, $P = (V, <)$ be the constructed \tpo-free poset, and $A_1 = (V, \ell_1, E_1)$ be the constructed auxiliary digraph. It follows from Condition~\ref{item:a-initial} in the definition of auxiliary digraphs that $\ell_0$ satisfies the defining relation for the level function of $P$, so we have $\ell_0 = \ell_1$ as functions. Let $x, y \in V$ be two vertices. There is an edge in $A_0$ (and in $A_1$) between them iff $\ell_0(y) = \ell_0(x) \pm 1$; without loss of generality, assume $\ell_0(y) = \ell_0(x) + 1$. If $x \to y$ in $A_0$, then $x < y$ in $P$, and $x \to y$ in $A_1$ as well. Otherwise, if $y \to x$ in $A_0$, then $x \not< y$ in $P$, and $y \to x$ in $A_1$ as well. Thus, we have $E_0 = E_1$, and $A_0 = A_1$.
\end{proof}

\subsection{Cycle lemmas}\label{sec:cycles}

Our next task is to define the canonical partition of the vertex set $V$. To do this, we need some facts about the strongly connected components of the auxiliary digraph, so we turn our attention to its cycles.

\begin{definition}[tall cycle, squat cycle]
  As in \autoref{def:auxiliary}, we say that a (directed) cycle in the auxiliary digraph is \emph{tall} if it contains vertices from at least three different levels.
Conversely, we say that a cycle is \emph{squat} if it is contained in a pair of adjacent levels.
\end{definition}

\begin{figure}[b]
\[
\vcenter{\hbox{\fbox{\includegraphics{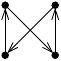}}}}
\hspace{2em}
\vcenter{\hbox{\fbox{\includegraphics{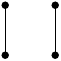}}}}
\hspace{3.5em}
\vcenter{\hbox{\fbox{\includegraphics{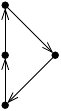}}}}
\hspace{2em}
\vcenter{\hbox{\fbox{\includegraphics{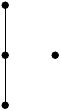}}}}
\]
\caption{From left to right:
a squat 4-cycle;
the corresponding induced subposet, which is a copy of \tpt{};
a tall 4-cycle;
the corresponding induced subposet, which is a copy of \tpo{}.}
\label{fig:short-cycles}
\end{figure}

\begin{remark}\label{rem:tpt}
  As noted earlier in the proofs of \autoref{cons:P-to-A} and \autoref{cons:A-to-P}, a tall 4-cycle corresponds to an induced copy of the \tpo{} poset. A squat 4-cycle corresponds to an induced copy of the \tpt{} poset. (See \autoref{fig:short-cycles}.) Since the auxiliary digraph is bipartite, these are the shortest possible cycles.
\end{remark}

The following proposition gives substantial restrictions on what the strongly connected components of the auxiliary digraph can look like, and shows that they can be computed by looking only at the squat 4-cycles. We will prove it as a series of lemmas.

\begin{proposition}\label{prop:tpt}
  Each non-trivial strongly connected component of the auxiliary digraph is contained in a pair of adjacent levels, and is generated by squat 4-cycles.
\end{proposition}

\begin{proof}
  For vertices $x, y \in V$, the relation `$x$ and $y$ are in the same strongly connected component' is the transitive closure of the relation `$x$ and $y$ are in a (directed) cycle', so we can prove the proposition by looking only at the cycles of the auxiliary digraph. By \autoref{lem:no-tall}, every cycle is contained in a pair of adjacent levels, and by \autoref{lem:no-squat-union}, this also holds for every connected union of squat 4-cycles. Each non-trivial strongly connected component can be obtained as a connected union of cycles, and by \autoref{lem:squat-gen}, each of these in turn is a connected union of squat 4-cycles, which completes the proof.
\end{proof}

\begin{lemma}\label{lem:no-tall}
  There are no tall cycles in the auxiliary digraph.
\end{lemma}

\begin{proof}
  We proceed by contradiction, and consider a \emph{shortest} tall cycle $C$. Let $x_1$ be a vertex of $C$ on its lowest level. Since $C$ is a tall cycle, if we follow it starting at $x_1$, we must eventually reach a vertex $z$ which is two levels higher. Let
\[
\xymatrix@C=0.5em@R=0.5em{
            &           &             &            &       &                             &                            &            &           &z \\
y_0 \ar[dr] &           & y_1 \ar[dr] &            &y_2    &\cdots                       &y_{k\mathrlap{{}-1}}\ar[dr] &            &y_k\ar[ur] &  \\
            &x_1 \ar[ur]&             &x_2 \ar[ur] &\cdots &x_{k\mathrlap{{}-1}} \ar[ur] &                            &x_k \ar[ur] &           &
}
\]
be the segment of $C$ from the predecessor $y_0$ of $x_1$ to $z$, where each $x_i$ is on the lowest level of $C$, and each $y_i$ is on the level above. Since they are on adjacent levels, the vertices $y_{k-1}$ and $z$ must be joined by an edge in the auxiliary digraph; however, the auxiliary digraph does not contain tall 4-cycles, and we already have the edges $y_{k-1} \to x_k \to y_k \to z$, so we must have $y_{k-1} \to z$. Let $D$ be the cycle obtained from $C$ by taking a shortcut along the edge $y_{k-1} \to z$:
\[
\xymatrix@C=0.5em@R=0.5em{
              &            &           &             &            &       &                             &                              & & &z \ar[r] &\cdots \\
\cdots \ar[r] &y_0 \ar[dr] &           & y_1 \ar[dr] &            &y_2    &\cdots                       &y_{k\mathrlap{{}-1}}\ar[urrr] & &\mbox{\phantom{$y_k$}} &         &      \\
              &            &x_1 \ar[ur]&             &x_2 \ar[ur] &\cdots &x_{k\mathrlap{{}-1}} \ar[ur] &                              &\mbox{\phantom{$x_k$}} & &         &
}
\]
By assumption, $C$ is a shortest tall cycle, so the shorter cycle $D$ must be squat. In particular, since it contains the vertices $y_{k-1}$ and $z$, $D$ cannot contain any of the $x_i$ vertices on the level below, or any vertices on a higher level. Thus, we have $k-1 = 0$, and $C$ has a unique vertex on its lowest level, namely $x_1$. By the same argument, turned upside-down, $C$ also has a unique vertex on its highest level, namely $z$. But then, the cycle $C$ is simply
\[
\xymatrix@C=0.5em@R=0.5em{
            &            &            &z \ar[dlll] \\
y_0 \ar[dr] &            &y_1 \ar[ur] &            \\
            &x_1 \ar[ur] &            &
}
\]
which is a tall 4-cycle, and a contradiction.
\end{proof}

\begin{lemma}\label{lem:squat-gen}
  Each squat cycle in the auxiliary digraph is generated by squat 4-cycles.
\end{lemma}

\begin{proof}
Let $C$ be the given squat cycle. We proceed by induction on the length of $C$. If every edge of $C$ is contained in a squat 4-cycle formed from vertices of $C$, then we are done. Otherwise, $C$ has length at least 6, and there is an edge, say $x_2 \to y_2$ in the following segment of $C$, which is not contained in such a squat 4-cycle:
\[
\xymatrix@C=0.5em@R=0.5em{
            &y_1 \ar[dr] &            &y_2 \ar[dr] &            &y_3 \ar[dr] & &\cdots &\ar[dr] &    \\
x_1 \ar[ur] &            &x_2 \ar[ur] &            &x_3 \ar[ur] &            & &\cdots &        &x_1
}
\]
Now, if we restrict the auxiliary digraph to the vertices of the pair of adjacent levels containing $C$, we simply have an orientation of the complete bipartite graph on these two levels. In particular, every edge between the $x_i$ vertices and the $y_i$ vertices is present in one direction or the other. Since the edge $x_2 \to y_2$ is not contained in a squat 4-cycle, we can deduce the direction of some of these edges: $x_1 \to y_2$ and $x_2 \to y_3$, among others. Thus, we can write the vertex set of $C$ as the connected union of two shorter squat cycles
\[
\vcenter{\xymatrix@C=0.5em@R=0.5em{
            &y_2 \ar[dr] &            &y_3 \ar[dr] & &\cdots &\ar[dr] &     \\
x_1 \ar[ur] &            &x_3 \ar[ur] &            & &\cdots &        & x_1
}}
\quad\text{and}\quad
\vcenter{\xymatrix@C=0.5em@R=0.5em{
            &y_1 \ar[dr] &            &y_3 \ar[dr] & &\cdots &\ar[dr] &     \\
x_1 \ar[ur] &            &x_2 \ar[ur] &            & &\cdots &        & x_1
}}.
\]
By induction, each of these cycles is generated by squat 4-cycles, so the original cycle $C$ is generated by squat 4-cycles as well.
\end{proof}

\begin{lemma}\label{lem:no-squat-union}
  If two squat 4-cycles in the auxiliary digraph intersect, then they are both contained in the same pair of adjacent levels.
\end{lemma}

\begin{proof}
  Suppose on the contrary that the squat 4-cycle $C$ contained in levels $i, i+1$ intersects the squat 4-cycle $D$ contained in levels $i+1, i+2$. By case analysis, we will use this to find a tall cycle, contradicting \autoref{lem:no-tall}. If $C$ and $D$ intersect in two vertices $y_1, y_2$, then the situation is
\begin{align*}
C\mathpunct{:} \vcenter{\xymatrix@C=0.5em@R=0.5em{
              &             &             & \mbox{\phantom{$z_0$}} \\
              & y_1 \ar[dr] &             & y_2 \ar[dlll]          \\
  x_1 \ar[ur] &             & x_2 \ar[ur] &
}} &&
D\mathpunct{:} \vcenter{\xymatrix@C=0.5em@R=0.5em{
              & z_1 \ar[dr] &             & z_2 \ar[dlll] \\
  y_1 \ar[ur] &             & y_2 \ar[ur] &               \\
              &             &             & \mbox{\phantom{$x_0$}}
}} &&
C \cup D\mathpunct{:} \vcenter{\xymatrix@C=0.5em@R=0.5em{
              &                     & z_1 \ar[dr] &                       & z_2 \ar[dlll] \\
              & y_1 \ar[ur] \ar[dr] &             & y_2 \ar[ur] \ar[dlll] &               \\
  x_1 \ar[ur] &                     & x_2 \ar[ur] &                       &
}}
\end{align*}
  in which case $x_1 \to y_1 \to z_1 \to y_2 \to x_1$ is a tall
  cycle. Otherwise, $C$ and $D$ intersect in a single vertex $y_2$,
  and the situation is
\begin{align*}
C\mathpunct{:} \vcenter{\xymatrix@C=0.5em@R=0.5em{
              &             &             & \mbox{\phantom{$z_0$}} \\
              & y_2 \ar[dr] &             & y_3 \ar[dlll]          \\
  x_2 \ar[ur] &             & x_3 \ar[ur] &
}} &&
D\mathpunct{:} \vcenter{\xymatrix@C=0.5em@R=0.5em{
              & z_1 \ar[dr] &             & z_2 \ar[dlll] \\
  y_1 \ar[ur] &             & y_2 \ar[ur] &               \\
              &             &             & \mbox{\phantom{$x_0$}}
}} &&
C \cup D\mathpunct{:} \vcenter{\xymatrix@C=0.5em@R=0.5em{
              & z_1 \ar[dr] &                     & z_2 \ar[dlll] &               \\
  y_1 \ar[ur] &             & y_2 \ar[ur] \ar[dr] &               & y_3 \ar[dlll] \\
              & x_2 \ar[ur] &                     & x_3 \ar[ur]   &
}}
\end{align*}
  Since $x_2$ and $y_1$ are on adjacent levels, the auxiliary digraph contains the edge between them in one direction or the other. Depending on whether $x_2 \to y_1$ or $y_1 \to x_2$, we have one of the tall cycles
\[
\vcenter{\xymatrix@C=0.5em@R=0.5em{ 
              & z_1 \ar[dr] &             &             &               \\
  y_1 \ar[ur] &             & y_2 \ar[dr] &             & y_3 \ar[dlll] \\
              & x_2 \ar[ul] &             & x_3 \ar[ur] &
}}
\qquad\text{or}\qquad
\vcenter{\xymatrix@C=0.5em@R=0.5em{ 
              &             &             & z_2 \ar[dlll] \\
  y_1 \ar[dr] &             & y_2 \ar[ur] &               \\
              & x_2 \ar[ur] &             &
}}.\qedhere
\]
\end{proof}

\subsection{The canonical partition}

With the help of the cycle lemmas from \hyperref[sec:cycles]{the previous subsection}, we can now define the tangles and clone sets which form the canonical partition.

\begin{definition}[tangle, clone set, canonical partition]\label{def:tangle}\label{def:clone-set}
  The vertex sets of the non-trivial strongly connected components of the auxiliary digraph are called \emph{tangles}.
  If two vertices are not in any tangles and they have the same in- and out-neighbourhoods, they are said to be \emph{clones} of each other.
  The equivalence classes for the relationship of being clones are called \emph{clone sets}.
  Together, the tangles and the clone sets are the blocks of a partition of the set $V$ of vertices called the \emph{canonical partition} and denoted by $B$.
\end{definition}

\begin{figure}[b]
\includegraphics{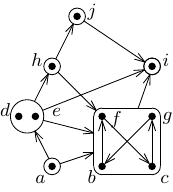}
\qquad \quad
\includegraphics{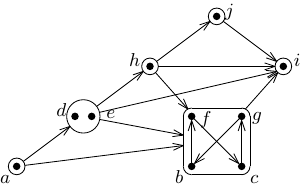}
\caption{Left: the canonical partition with induced edges of the poset
  $P$ from \autoref{fig:poset-auxgraph}. Tangles are enclosed in boxes and clones are
  enclosed in circles. Right: the skeleton with left-right
  ordering of the blocks obtained from the auxiliary digraph from
  \autoref{fig:poset-auxgraph} after applying \autoref{cons:A-to-S}.}
\label{fig:canonicalptn-skeleton}
\end{figure}

\begin{example}
The canonical partition of the poset in \autoref{fig:poset-auxgraph} consists of a tangle with vertices $b,c,f,g$ and five clone sets: the pair $\{d,e\}$ and the singletons $\{a\}$, $\{h\}$, $\{i\}$, and $\{j\}$. See \autoref{fig:canonicalptn-skeleton}.
\end{example}

Informally, the canonical partition is a way to separate out the local structure and the global structure, so to speak, between the vertices of the auxiliary digraph (or equivalently, of the original \tpo-free poset). By `local structure', we mean the relationships between vertices inside each tangle, or each clone set; by `global structure', we mean the relationships between vertices in different blocks of the canonical partition.

\begin{remark}
  The notion of clones is related to the notion of \emph{trimming} of Lewis and Zhang~\cite{LZ}. Also, Zhang~\cite{YXZ} has used techniques involving clones and \tpt-avoidance to prove enumeration results about families of strongly graded posets.
\end{remark}

\begin{remark}\label{rem:altitude}
  The canonical partition of a \tpo-free poset $P$ into clone sets and tangles generalizes the decomposition considered by Skandera and Reed~\cite{SkR} of a \tpo-and-\tpt-free poset given by the \emph{altitude} of the vertices, since the altitude is constant on each clone set, and different clone sets are at different altitudes.

  The altitude of each vertex is still well-defined for \tpo-free posets which \emph{do} contain an induced \tpt{} subposet, and the partition of the vertices according to their altitude gives a finer decomposition than the canonical partition defined here. However, as the example in \autoref{fig:tau} shows, the altitude partition is too fine for \autoref{thm:automorphism} to hold (with the canonical partition replaced by the altitude partition).
  Namely, there is an automorphism $\tau$ which swaps the two vertices with altitude $-1$, the two vertices with altitude $-2$, and two of the three vertices with altitude $2$, as illustrated; but there is no automorphism of the poset which acts non-trivially on a single block of the altitude partition.

  In contrast, for the canonical partition, every automorphism of the poset can be factored as a product of automorphisms which only act non-trivially on a single block.
\end{remark}

\begin{figure}[h]
  \begin{center}
    \includegraphics[scale=.8]{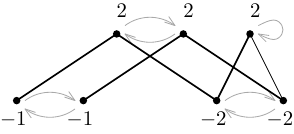}
    \caption{A poset consisting of a single tangle. The vertices are labelled by their altitude~\cite{SkR}, and the arrows illustrate the automorphism $\tau$ mentioned in \autoref{rem:altitude}.}
    \label{fig:tau}
  \end{center}
\end{figure}

\subsection{Skeleta}\label{sec:structure-skeleta}

The structure inside a given clone set is particularly trivial. Two vertices that are clones of each other are completely interchangeable (as made precise in \autoref{thm:automorphism}); they have the same in- and out-neighbourhoods in the auxiliary digraph, they are on the same level, and they are necessarily incomparable in the associated poset.

The structure inside a tangle is richer, but still easy to describe: it consists of a strongly connected orientation of the complete bipartite graph between the vertices on its lower level and the vertices on its upper level.

As we will now see, the structure between the blocks of the canonical partition can be described by a dependence graph, in the sense of the theory of combinatorial traces~\cite{Traces}. Being an acyclic graph, the dependence graph can be seen as a separate poset structure on top of the original poset. To distinguish between the two, we will use terms associated with the left/right axis when discussing the dependence graph, and reserve the traditional up/down axis for the original poset.

\begin{definition}[left, right for vertices]
  Given two vertices $x, y \in V$, we say that $x$ is \emph{left} of $y$ (or $y$ is \emph{right} of $x$), written $x \pathto y$, if there is a path from $x$ to $y$ in the auxiliary digraph.
\end{definition}

\begin{definition}[left, right for blocks]\label{def:lr}
  Given two \emph{distinct} blocks $X, Y \in B$ of the canonical partition we say that $X$ is \emph{left} of $Y$ (or $Y$ is \emph{right} of $X$), written $X \pathto Y$, if there is a path from some (or equivalently all) $x \in X$ to some (or equivalently all) $y \in Y$ in the auxiliary digraph.
\end{definition}

\begin{example}
  In \autoref{fig:canonicalptn-skeleton}, we have the path $h \to j \to i$ from the vertex $h$ to the vertex $i$, so $h$ is left of $i$, denoted $h \pathto i$. We have a path $d \to b \to f \to i$ from a vertex in the block $X = \{d, e\}$ to a vertex in the block $Y = \{i\}$, so $X$ is left of $Y$, denoted $X \pathto Y$.
\end{example}

\begin{definition}[dependence alphabet]\label{def:dep-alpha}
  Let $\Gamma = (\Sigma, D)$ be the \emph{dependence alphabet} which consists of the countable alphabet of symbols
  \[
    \Sigma = \{c_i \mid i \in \NN\} \cup \{t_{i\,i+1} \mid i \in \NN\}
  \]
  and the dependence relation
  \[
    D = \bigcup_{i\in\NN} \{\;t_{i-1\,i}\;, \;c_i\;, \;t_{i\,i+1}\;, \;c_{i+1}\;, \;t_{i+1\,i+2}\;\}^2 \subset \Sigma^2,
  \]
  where the symbol $t_{-1\,0}$ is ignored by convention. The letter $c_i$ will typically denote a clone set on level $i$, and the letter $t_{i\,i+1}$ will typically denote a tangle on levels $i$ and $i+1$.
\end{definition}

\begin{definition}[skeleton]\label{def:skel}
  A \emph{skeleton} $S = (V, B, \ell', E')$ consists of a (finite) set $V$ of vertices, a partition $B$ of $V$ into blocks, a labelling function $\ell' \colon B \to \Sigma$ (where $\Gamma = (\Sigma, D)$ is the dependence alphabet from \autoref{def:dep-alpha}), and a set $E' \subseteq B^2$ of directed edges, denoted $X \to Y$ if $(X, Y) \in E'$, such that:
  \begin{enumerate}[({S}1)]
    \item\label{item:S-edges} for blocks $X, Y \in B$, there is a dependence $(\ell'(X), \ell'(Y)) \in D$ between their labels if and only if either $X = Y$ or $X \to Y$ or $Y \to X$;
    \item\label{item:S-acyclic} the directed graph $(B, E')$ is acyclic;
    \item\label{item:S-source} the directed graph $(B, E')$ is either empty or has a single source (block with no inbound edges), and it is labelled either $c_0$ or $t_{01}$; and
    \item\label{item:S-indirect} if two blocks $X, Y \in B$ such that $X \to Y$ are labelled $\ell'(X) = \ell'(Y) = c_i$ for some $i$, then there exists a third block $Z \in B$ such that $X \to Z \to Y$.
  \end{enumerate}
\end{definition}

\begin{remark}\label{rem:dependence}
  The first two conditions make the skeleton into a dependence graph over $\Gamma$ in the sense of Lemma~2.4.1 of~\cite{Traces}.
\end{remark}

\begin{example}
  The right side of \autoref{fig:canonicalptn-skeleton} gives an example of a skeleton. Note that the skeleton may include edges which are not in the auxiliary digraph, such as $\{h\} \to \{i\}$; these extra edges join every pair of clone sets on the same level.
\end{example}

\begin{construction}\label{cons:A-to-S}
  Given an auxiliary digraph $A = (V, \ell, E)$, we can construct a skeleton $S = (V, B, \ell', E')$ as follows:
  \begin{enumerate}[i., font=\itshape]
    \item keep the same vertex set $V$;
    \item take $B$ to be the canonical partition of $A$;
    \item set $\ell'(X) = c_i$ if $X$ is a clone set on level $i$, or $\ell'(X) = t_{i\,i+1}$ if $X$ is a tangle on levels $i$ and $i+1$;
    \item for blocks $X, Y \in B$, let $X \to Y$ if there is a dependence $(\ell'(X), \ell'(Y)) \in D$ between their labels and $X \pathto Y$.
  \end{enumerate}
\end{construction}

\begin{proof}
  Let us check that this construction yields a skeleton which satisfies all the conditions of \autoref{def:skel}.

  Clearly, the vertex set $V$, the partition $B$, the labelling function $\ell'$, and the set of edges $E'$ produced by the construction are of the right type.

  To check Condition~\ref{item:S-edges}, consider blocks $X, Y \in B$ with $(\ell'(X), \ell'(Y)) \in D$ and $X \neq Y$. By the definition of the dependence relation $D$, the two labels $\ell'(X)$, $\ell'(Y)$ are both in a set of the form
  \[
    \{\;t_{i-1\,i}\;, \;c_i\;, \;t_{i\,i+1}\;, \;c_{i+1}\;, \;t_{i+1\,i+2}\;\}
  \]
  for some $i$. Unless $\ell'(X) = \ell'(Y) = c_i$ or $\ell'(X) = \ell'(Y) = c_{i+1}$, this means that there is a vertex $x \in X$ such that there is a vertex $y \in Y$ with $\ell(y) = \ell(x) \pm 1$, and it follows that $x \to y$ or $y \to x$, so $X \pathto Y$ or $Y \pathto X$, so $X \to Y$ or $Y \to X$.

  The remaining case is when $X$ and $Y$ are two distinct clone sets on the same level, say $i$. Let $x \in X$ and $y \in Y$. The neighbours of $x$ and of $y$ are exactly the vertices on levels $i+1$ and $i-1$, but $x$ and $y$ cannot have the same in- and out-neighbours, since they are in distinct clone sets. Thus, there must be some vertex $z$ on level $i \pm 1$ such that $x \to z \to y$ or $y \to z \to x$. It follows that $X \pathto Y$ or $Y \pathto X$, so $X \to Y$ or $Y \to X$.

  To check Condition~\ref{item:S-indirect}, take $Z$ to be the block containing $z$ in the previous paragraph.

  To check Condition~\ref{item:S-acyclic}, note that every cycle in the auxiliary digraph $A$ is contained within a single tangle, since the tangles are the non-trivial strongly connected components of $A$. It follows that the edges in $E'$ do not form any directed cycles.

  To check Condition~\ref{item:S-source}, consider a block $X \in B$ with a label other than $c_0$ or $t_{01}$, say $c_i$ or $t_{i\,i+1}$ for some $i > 0$. Then, there is some vertex $x \in X$ with $\ell(x) = i > 0$, so there is some vertex $y \in V$ with $\ell(y) = \ell(x) - 1$ and $y \to x$. This vertex $y$ is in a different block $Y$ with $Y \to X$, so $X$ is not a source in the directed graph $(B, E')$.
  Thus, every source of this digraph must have a label in $\{c_0, t_{01}\}$.
  Since it is acyclic, the digraph must contain at least one source, say $X \in B$.
  If another block $Y \in B$ has a label in $\{c_0, t_{01}\}$, then either $X \to Y$ or $Y \to X$ by Condition~\ref{item:S-edges}, so $X \to Y$, and $Y$ is not a source.
  Thus, the source $X$ is unique.
\end{proof}

Before giving the inverse construction, we need a more formal definition of `the structure inside a tangle'.

\begin{definition}[tangle, by itself]
  A \emph{tangle} $T_X = (L_X, U_X, E_X)$ consists of two nonempty sets of vertices $L_X$ and $U_X$, called its \emph{lower level} and \emph{upper level}, respectively, together with a set of directed edges $E_X$ given by a strongly connected orientation of the complete bipartite graph on $L_X$ and $U_X$.
\end{definition}

\begin{example}
  The unique tangle from \autoref{fig:canonicalptn-skeleton} has lower level $L_X = \{b, c\}$, upper level $U_X = \{f, g\}$, and edges $b \to f \to c \to g \to b$.
\end{example}

Since the skeleton is supposed to capture the structure in the auxiliary digraph \emph{between} the components of the canonical partition, and the tangles are supposed to capture the structure \emph{inside} the non-trivial components of the canonical partition, we should be able to recover the auxiliary digraph from the combination of all this data. This is made more formal in the following construction and proposition.

\begin{construction}\label{cons:ST-to-A}
  Given a skeleton $S = (V, B, \ell', E')$ and a collection of tangles $T_X = (L_X, U_X, E_X)$ on the vertex sets of the blocks $X \in B$ labelled $t_{i\,i+1}$ for all $i$, we can construct an auxiliary digraph $A = (V, \ell, E)$ as follows:
  \begin{enumerate}[i., font=\itshape]
    \item keep the same vertex set $V$;
    \item set $\ell(x) = i$ if the vertex $x$ is in a block $X$ labelled $\ell'(X) = c_i$, or on the lower level $L_X$ of a tangle labelled $\ell'(X) = t_{i\,i+1}$, or on the upper level $U_X$ of a tangle $X$ labelled $\ell'(X) = t_{i-1\,i}$;
    \item take $E$ to be the union of the edge sets $E_X$ from the given tangles, together with the edges $x \to y$ for vertices $x \in X \in B$ and $y \in Y \in B$ belonging to distinct blocks such that $\ell(y) = \ell(x) \pm 1$ and $X \to Y$.
  \end{enumerate}
\end{construction}

\begin{proof}
  Let us check that this construction produces an auxiliary digraph which satisfies the conditions of \autoref{def:auxiliary}.

  The constructed sets $V$ of vertices and $E$ of edges are clearly of
  the correct type, and the constructed function $\ell \colon V \to \NN$ is a level function if Condition~\ref{item:a-initial} holds.

  To check Condition~\ref{item:a-edges}, note that the underlying undirected graph for each tangle $T_X$ is a complete bipartite graph between its lower level and its upper level, and that for any two blocks $X, Y \in B$ with vertices on adjacent levels, we have either $X \to Y$ or $Y \to X$.

  To check Condition~\ref{item:a-initial}, consider a vertex $x \in V$ with $\ell(x) > 0$. If $x$ is on the upper level of a tangle, then it is part of some cycle contained in this tangle, and the previous vertex $y$ in this cycle satisfies $\ell(y) = \ell(x) - 1$ and $y \to x$. Otherwise, the vertex $x$ is on some level $i > 0$ and is in a block $X$ labelled $c_i$ or $t_{i\,i+1}$. Since the only source of the skeleton digraph $(B, E')$ has label $c_0$ or $t_{01}$, there must be a block $Y$ with $Y \to X$ with a label in
  \[
    \{\;t_{i-2\,i-1}\;, \;c_{i-1}\;, \;t_{i-1\,i}\;\},
  \]
  otherwise $X$ would not be reachable from the source. This block $Y$ contains a vertex $y$ on level $i-1$, and by construction we have $\ell(y) = \ell(x) - 1$ and $y \to x$.

  To check Condition~\ref{item:a-notall}, note that any cycle of the constructed auxiliary digraph must be contained in a single tangle $T_X$, since the skeleton digraph $(B, E')$ is acyclic, so any 4-cycle must be squat, not tall.
\end{proof}

\begin{definition}[bare, fleshed out skeleton]
  We may refer to the data for \autoref{cons:ST-to-A} as a \emph{fleshed out} skeleton. By contrast, a skeleton by itself could be referred to as a \emph{bare} skeleton.
\end{definition}

\begin{proposition}\label{prop:bijection2}
  \autoref{cons:A-to-S} and \autoref{cons:ST-to-A} are inverses of each other. Hence, they establish a bijection between auxiliary digraphs and fleshed out skeleta.
\end{proposition}

\begin{proof}
  For the first direction (applying \autoref{cons:A-to-S} first, then \autoref{cons:ST-to-A}), we need to show that the level function $\ell \colon V \to \NN$ and the set of edges $E$ is preserved.

  The data of the level function $\ell \colon V \to \NN$ can be recovered almost completely from the composition of the natural projection from the vertex set $V$ to the canonical partition $B$ and the constructed labelling function $\ell' \colon B \to \Sigma$; every vertex belonging to a clone set labelled $c_i$ must be on level $i$, and every vertex belonging to a tangle labelled $t_{i\,i+1}$ must be on level $i$ or $i+1$. This last ambiguity can be resolved by looking at the tangle data, which includes the information of whether each vertex is on the lower level or the upper level of the tangle.

  The edges in $E$ are of the form $x \to y$ or $y \to x$, where the vertex $x \in X$ is on some level $i$, and the vertex $y \in Y$ is on the next level $i+1$. If $X = Y$, then this block is a tangle, so the edge between $x$ and $y$ is recorded in the tangle data $T_X$. Otherwise, the labels $\ell'(X)$ and $\ell'(Y)$ must both be in the set
  \[
    \{\;t_{i-1\,i}\;, \;c_i\;, \;t_{i\,i+1}\;, \;c_{i+1}\;, \;t_{i+1\,i+2}\;\},
  \]
  so the constructed skeleton records the edge between the vertices $x$ and $y$ as part of the edge between the blocks $X$ and $Y$.

  For the other direction (applying \autoref{cons:ST-to-A} first, then \autoref{cons:A-to-S}), we need to show that the partition $B$, the labelling function $\ell'$, and the edge set $E'$ are all preserved. However, if the partition $B$ is preserved, then it should be clear that the data for $\ell'$ and $E'$ is preserved in the form of the data for $\ell$ and $E$ in the auxiliary digraph.

  Let $S_0 = (V, B_0, \ell'_0, E'_0)$ be the original skeleton, $A = (V, \ell, E)$ be the constructed auxiliary digraph, and $S_1 = (V, B_1, \ell'_1, E'_1)$ be the constructed skeleton.
  By Condition~\ref{item:S-acyclic}, the skeleton digraph $(B_0, E'_0)$ is acyclic, so it follows that all the cycles in the auxiliary digraph come from the tangles $T_X$. Furthermore, these tangles are by definition strongly connected, so they form the non-trivial strongly connected components of $A$, and they give the tangles in the canonical partition of the constructed auxiliary digraph. Thus, the blocks of $B_0$ labelled as $t_{i\,i+1}$ become blocks of $B_1$ with the same label.

  It remains to check the clone sets. Let $X \in B_0$ have label $\ell'(X) = c_i$. Then, by elimination, the vertices in $X$ are not part of tangles in the auxiliary digraph $A$; and by construction, they have the same in- and out-neighbourhoods, so they are part of the same clone set in $A$. If the vertices of $X$ form the totality of this clone set in $A$, then we will have $X \in B_1$ as desired.

  Let $Y \in B_0$ be another block with label $\ell'(Y) = c_i$, and without loss of generality, let $X \to Y$ in $S_0$. By Condition~\ref{item:S-indirect}, there is another block $Z \in B_0$ with $X \to Z \to Y$. In fact, by repeatedly looking between $X$ and $Z$ if $\ell'(Z) = c_i$, we can find another $Z$ such that $\ell'(Z) \neq c_i$. Then, the block $Z$ contains a vertex $z$ on level $i \pm 1$, and for any $x \in X$ and $y \in Y$, we have $x \to z \to y$ in the auxiliary digraph $A$. Thus, $x$ and $y$ do not have the same in- and out-neighbourhoods, and they are not part of the same clone set in $A$.
\end{proof}

By putting together all the definitions and constructions of this section so far, we get our first main theorem.

\begin{theorem}\label{thm:bijection}
  There is a bijection between \tpo-free posets and fleshed out skeleta.
\end{theorem}

\begin{proof}
  Apply \autoref{cons:P-to-A} and \autoref{cons:A-to-S} to get a fleshed out skeleton from a \tpo-free poset; and apply \autoref{cons:ST-to-A} and \autoref{cons:A-to-P} to get a \tpo-free poset from a fleshed out skeleton. By \autoref{prop:bijection1} and \autoref{prop:bijection2}, these composite constructions are bijective.
\end{proof}

\subsection{Automorphisms}

Having separated the structure of \tpo-free posets into a global part (the skeleton) and some local parts (the tangles and clone sets), we can now describe the automorphisms of \tpo-free posets; as the following result shows, the global structure is completely rigid, whereas the local structures are completely decoupled.

\begin{definition}[automorphism]
  Let $P = (V, <)$ be a \tpo-free poset and $B$ be its canonical partition into clone sets and tangles.

  A \emph{poset automorphism} of $P$ is a permutation $\sigma$ of $V$ for which $x < y$ iff $\sigma(x) < \sigma(y)$ whenever $x, y \in P$.

  A \emph{clone set automorphism} of a clone set $X \in B$ is any permutation of $V$ which fixes $V \setminus X$ pointwise.

  A \emph{tangle automorphism} of a tangle $X \in B$ with lower level $L$, upper level $U$ and edges $E \subseteq L \times U$ is a permutation $\sigma$ of $V$ which fixes $V \setminus X$ pointwise, and fixes $L$, $U$ and $E$ setwise.

  We write $\aut(P)$ for the group of all poset automorphisms of $P$, and $\aut(X)$ for the group of all clone set or tangle automorphisms of a block $X \in B$.
\end{definition}

\begin{example}
  The canonical partition of the poset $P$ from \autoref{fig:poset-auxgraph} is given in \autoref{fig:canonicalptn-skeleton}. The clone set $\{d, e\}$ has one non-trivial clone set automorphism $\sigma$, which exchanges the vertices $d \leftrightarrow e$ and fixes each of the remaining vertices, $\{a, b, c, f, g, h, i, j\}$. The tangle $\{b, c, f, g\}$ has one non-trivial tangle automorphism $\rho$, which exchanges the lower vertices $b \leftrightarrow c$, exchanges the upper vertices $f \leftrightarrow g$, and fixes each of the remaining vertices, $\{a, d, e, h, i, j\}$. These two automorphisms commute, since they have disjoint supports, and they generate the group $\{\id, \sigma, \rho, \sigma \circ \rho\}$ of all poset automorphisms of $P$.
\end{example}

\begin{theorem}\label{thm:automorphism}
  Let $P = (V, <)$ be a \tpo-free poset and $B$ be its canonical partition into clone sets and tangles. We have the decomposition
  \[
    \aut(P) = \prod_{X \in B} \aut(X),
  \]
  where the product is an internal direct product of subgroups.
\end{theorem}

\begin{proof}
  Since we are dealing with groups of permutations of $V$ and the groups $\aut(X)$ all act on disjoint subsets of $V$, it suffices to show that $\aut(X) \subseteq \aut(P)$ for each $X \in B$ and $\aut(P) \subseteq \prod_{X \in B} \aut(X)$.

  Suppose $X \in B$ is a clone set, and let $\sigma \in \aut(X)$. Since they form a clone set, the vertices of $X$ are on the same level and have the same in- and out-neighbourhoods in the auxiliary digraph $A = (V, \ell, E)$ of the poset $P$. Since $\sigma$ fixes $X$ setwise and fixes $V \setminus X$ pointwise, it follows that $\sigma$ preserves the auxiliary digraph. By \autoref{cons:A-to-P}, it follows that $\sigma$ preserves the relation $<$, so $\sigma \in \aut(P)$. Thus, $\aut(X) \subseteq \aut(P)$ when $X$ is a clone set.

  Now suppose $X \in B$ is a tangle, with lower level $L_X$, upper level $U_X$ and edges $E_X \subseteq L_X \times U_X$, and let $\sigma \in \aut(X)$. Then, the in- and out-neighbourhoods in the auxiliary digraph of vertices in $L_X$ only differ by vertices in $U_X$, and vice versa. Since $\sigma$ fixes $L_X$, $U_X$ and $E_X$ setwise and fixes $V \setminus X$ pointwise, it follows that $\sigma$ preserves the auxiliary digraph, hence it preserves the relation $<$. Thus, $\aut(X) \subseteq \aut(P)$ when $X$ is a tangle.

  Finally, suppose, $\sigma \in \aut(P)$. Let $\ell$ be the level function, $A = (V, \ell, E)$ be the auxiliary digraph, $B$ be the canonical partition, and $S = (V, B, \ell', E')$ be the skeleton for $P$. Since $\sigma$ preserves the order relation $<$, we have $\ell(\sigma(x)) = \ell(x)$ for all vertices $x \in V$, and $x \to y$ is an edge in $E$ iff $\sigma(x) \to \sigma(y)$ is an edge; thus, the level function and the auxiliary digraph are preserved by $\sigma$. In particular, $\sigma$ acts on the blocks of the canonical partition, sending each block $X \in B$ to a block $\sigma(X) = \{\sigma(x) \mid x \in X\} \in B$. Clearly, we have $\ell'(\sigma(X)) = \ell'(X)$ and $X \to Y$ is an edge in $E'$ iff $\sigma(X) \to \sigma(Y)$ is an edge, so the skeleton is preserved by $\sigma$. However, the skeleton induces a total ordering on the set of blocks with a given label, so it follows that $\sigma(X) = X$ for each block $X \in B$. For each block $X \in B$, let
  \[
    \sigma_X(x) = \begin{cases}
      \sigma(x) &\text{if $x \in X$,} \\
      x &\text{if $x \in V \setminus X$.}
    \end{cases}
  \]
  Then, we have $\sigma = \prod_{X \in B} \sigma_X$, and since each $\sigma_X$ preserves the structure of $X$ as a tangle or as a clone set, we have $\sigma_X \in \aut(X)$. Thus, $\aut(P) \subseteq \prod_{X \in B} \aut(X)$.
\end{proof}

\section{Enumeration}\label{sec:enumeration}

Using the decomposition from \autoref{sec:structure} for \tpo-free posets into skeleta containing tangles and clone sets, we obtain the following proposition, which is our main enumerative result. It gives generating functions for the number of distinct \tpo-free posets with respect to the number of vertices, which can be either unlabelled or labelled. The formulas use simple ingredients combined in simple ways, with one exception: the generating functions for the number of distinct bicoloured graphs with respect to the number of vertices. In a sense, then, we reduce the problem of counting \tpo-free posets to the problem of counting bicoloured graphs.

\begin{theorem}\label{thm:enum}
  Let
  \[
    \Pu(x)
      = \sum_{n \geq 0} \left(\text{\parbox{.3\linewidth}{
          \# of \tpo-free posets with $n$ unlabelled vertices
        }}\right) x^n
  \]
  be the ordinary generating function for \tpo-free posets with unlabelled vertices, and
  \[
    \Pl(x)
      = \sum_{n \geq 0} \left(\text{\parbox{.3\linewidth}{
          \# of \tpo-free posets with $n$ labelled vertices
        }}\right) \frac{x^n}{n!}
  \]
  be the exponential generating function for \tpo-free posets with labelled vertices. Then, we have
  \begin{align*}
    \Pu(x) &= S(\Cu(x), \Tu(x, x)) \\
    \Pl(x) &= S(\Cl(x), \Tl(x, x)),
  \end{align*}
  where
  \[
    \Cu(x) = \frac{x}{1 - x}, \qquad
    \Cl(x) = e^x - 1
  \]
  are the generating functions for clone sets from \autoref{prop:genfunc-clone},
  \[
    \Tu(x, y) = 1 - x - y - \Bu(x, y)^{-1}, \qquad
    \Tl(x, y) = e^{-x} + e^{-y} - 1 - \Bl(x, y)^{-1}
  \]
  are the generating functions for tangles from \autoref{prop:genfunc-tangle}, and $S(c, t)$ is the ordinary generating function for skeleta from \autoref{prop:genfunc-skeleta}, which is uniquely determined by the equation
  \[
    S(c, t) = 1 + \frac{c}{1 + c} S(c, t)^2 + t S(c, t)^3.
  \]
\end{theorem}

\begin{proof}
  Given \autoref{prop:bijection1} and \autoref{prop:bijection2}, we can count \tpo-free posets by counting fleshed out skeleta. A fleshed out skeleton consists of a bare skeleton together with some tangles and clone sets, and these tangles and clone sets can be chosen completely independently of each other, provided that the number of tangles and the number of clone sets is as specified by the skeleton.

  It follows from standard generating function theory (see~\cite{BLL,EC1,EC2}, for example) that taking the ordinary generating function for bare skeleta with respect to the number of clone sets and the number of tangles, and plugging in the ordinary (or exponential) generating functions for unlabelled (or labelled) clone sets and tangles with respect to the number of vertices yields the ordinary (or exponential) generating function for fleshed out skeleta on unlabelled (or labelled) vertices with respect to the number of vertices.

  The details for obtaining the generating functions for clone sets, tangles and skeleta are given in the following subsections.
\end{proof}

\begin{remark}
  Note that by \autoref{rem:tpt} and \autoref{prop:tpt}, a \tpo-free poset is also \tpt-free iff it contains no tangles. Therefore, we can recover the generating functions for \tpo-and-\tpt-free posets by setting $t = 0$ in the defining equation for $S(c, t)$; indeed, in the unlabelled case, the ordinary generating function for \tpo-and-\tpt-free posets obtained in this way is $C(x) = S(x/(1-x), 0)$, which satisfies the functional equation $C(x) = 1 + xC(x)^2$ for Catalan numbers.

  In contrast, as noted by~\cite{KRem}, the structure theory for \tpt-free posets in terms of \emph{ascent sequences} (see~\cite{B-MCDK}) doesn't highlight induced \tpo{} subposets, and so doesn't allow the recovery of the \tpo-and-\tpt-free case.
\end{remark}

\begin{remark}
Fran\c{c}ois Bergeron has pointed out that the results of this section can be generalized to obtain the cycle index series (see~\cite{BLL}) for the species of \tpo-free posets.
\end{remark}

\subsection{Clone sets}

As noted in \autoref{sec:structure-skeleta}, the structure inside a clone set is trivial, since a clone set is simply a set of incomparable vertices, so there is exactly one possible `clone set structure' on any given (non-empty) set of vertices. For consistency with the rest of our approach, we record this fact as a pair of generating functions.

\begin{proposition}\label{prop:genfunc-clone}
  The ordinary generating function for clone sets with unlabelled vertices is
  \[
    \Cu(x)
      = \sum_{n \geq 1} \left(\text{\parbox{.3\linewidth}{
          \# of clone sets consisting of $n$ unlabelled vertices
        }}\right) x^n
      = \frac{x}{1 - x}.
  \]
  The exponential generating function for clone sets on sets of labelled vertices is
  \[
    \Cl(x)
      = \sum_{n \geq 1} \left(\text{\parbox{.3\linewidth}{
          \# of clone sets consisting of $n$ labelled vertices
        }}\right) \frac{x^n}{n!}
      = e^x - 1.
  \]
\end{proposition}

\begin{remark}
  Since clone sets appear as components in larger structures (namely, skeleta), it is useful to consider only \emph{non-empty} clone sets in the generating functions above, hence the summations over $n \geq 1$ instead of $n \geq 0$.
\end{remark}

\subsection{Tangles}

According to \autoref{def:tangle}, the structure inside a tangle consists of a partition of its vertices into a lower level and an upper level, together with a strongly connected orientation of the complete bicoloured graph between these two levels. Thus, we can count the possible tangles indirectly by considering \emph{all} orientations of complete bicoloured graphs, and then passing to their strongly connected components.

Since the decomposition theory for orientations of complete bicoloured graphs is simple, we obtain a simple relationship between the generating functions for tangles and for orientations of complete bicoloured graphs.

This can also be seen as a restriction\footnote{Modified so that isolated vertices of the Hasse diagram are allowed to be on level 0 or 1, not just 0.} of the decomposition developed in \autoref{sec:structure} to the case of posets of height at most two (that is, with at most two levels), which are automatically \tpo-free, and whose Hasse diagrams are simply bipartite graphs.

\begin{proposition}\label{prop:genfunc-tangle}
  Let
  \[
    \Bu(x, y)
      = \sum_{n, m \geq 0} \left(\text{\parbox{.33\linewidth}{
          \# of bicoloured graphs with $n$ vertices below and $m$ vertices above, unlabelled
        }}\right) x^n y^m
  \]
  be the ordinary generating function for bicoloured graphs with unlabelled vertices, and
  \[
    \Bl(x, y)
      = \sum_{n, m \geq 0} \left(\text{\parbox{.31\linewidth}{
          \# of bicoloured graphs with $n$ vertices below and $m$ vertices above, labelled
        }}\right) \frac{x^n}{n!} \, \frac{y^m}{m!}
      = \sum_{n, m \geq 0} 2^{nm} \, \frac{x^n}{n!} \, \frac{y^m}{m!}
  \]
  be the exponential generating function for bicoloured graphs with labelled vertices. Then, the ordinary generating function for tangles with unlabelled vertices is
  \begin{align*}
    \Tu(x, y)
      &= \sum_{n, m \geq 2} \left(\text{\parbox{.33\linewidth}{
          \# of tangles with $n$ vertices below and $m$ vertices above, unlabelled
        }}\right) x^n y^m \\
      &= 1 - x - y - \Bu(x, y)^{-1},
  \end{align*}
  and the exponential generating function for tangles with labelled vertices is
  \begin{align*}
    \Tl(x, y)
      &= \sum_{n, m \geq 2} \left(\text{\parbox{.33\linewidth}{
          \# of tangles with $n$ vertices below and $m$ vertices above, labelled
        }}\right) \frac{x^n}{n!} \, \frac{y^m}{m!} \\
      &= e^{-x} + e^{-y} - 1 - \Bl(x, y)^{-1}.
  \end{align*}
\end{proposition}

\begin{remark}
  As with clone sets, tangles appear as components in skeleta, so it is useful to consider only non-empty tangles in the generating functions above, hence the summations over $n, m \geq 2$ for $\Tu(x, y)$ and $\Tl(x, y)$. However, we do consider smaller bicoloured graphs, so we have summations over $n, m \geq 0$ for $\Bu(x, y)$ and $\Bl(x, y)$.
\end{remark}

\begin{proof}
  We proceed by defining an analogue of the canonical partition for bicoloured graphs, which leads to a decomposition of bicoloured graphs into tangles and clone sets. This gives an expression for the generating functions for bicoloured graphs in terms of the generating functions for clone sets and tangles, which we can then invert.

  Let $G = (V, \ell, E)$ be a bicoloured graph, so that $V$ is a set of vertices, the function $\ell \colon V \to \{0, 1\}$ gives a colouring of the vertices, and $E \subseteq \ell^{-1}(\{0\}) \times \ell^{-1}(\{1\})$ is the set of edges joining vertices of colour 0 to vertices of colour 1, denoted by $x < y$ if $x, y \in V$ and $(x, y) \in E$. Then, we can view $G$ as a (coloured) poset of height at most two.

  To this graph $G$, we associate a digraph $A = (V, \ell, E')$, with the same vertex set $V$ and colouring function $\ell$, and edge set $E' \subseteq V^2$, denoted by $x \to y$ if $(x, y) \in E'$, defined as follows: for all $x \in \ell^{-1}(\{0\})$ and $y \in \ell^{-1}(\{1\})$, we have $x \to y$ if $x < y$, and $y \to x$ if $x \not< y$. Thus, the digraph $A$ is an orientation of the complete bicoloured graph with vertex set $V$ and colouring function $\ell$.

  Clearly, for a fixed vertex set $V$ and colouring function $\ell$, this association gives a bijection between the set of all bicoloured graphs and the set of all orientations of the complete bicoloured graph.

  Then, we can use \autoref{def:tangle} to obtain a partition $B$ of the vertices of $A$ into tangles (non-trivial strongly connected components) and clone sets (remaining vertices with the same in- and out-neighbourhoods). Also, for any two blocks $X, Y \in B$, we have either $X = Y$, or $X \pathto Y$, or $Y \pathto X$, so there is a natural total ordering
  \[
    X_0 \pathto X_1 \pathto X_2 \pathto \cdots \pathto X_k
  \]
  of the blocks of $B$. Let us label the blocks by $c_0$ or $c_1$ if they are clone sets (according to their level) or $t_{01}$ if they are tangles, so that the list of blocks above can be represented by a word over the alphabet $\{c_0, c_1, t_{01}\}$.

  It can be verified that the set of possible words obtained in this way is exactly the set of words in $c_0$, $c_1$ and $t_{01}$ with no pair of consecutive letters equal to $c_0 c_0$ or $c_1 c_1$. Then, by a standard inclusion-exclusion argument or by considering the regular expression
  \[
    \{\epsilon, c_0\}\{c_1 c_0\}^*\{\epsilon, c_1\}
    \Big( t_{01} \{\epsilon, c_0\}\{c_1 c_0\}^*\{\epsilon, c_1\} \Big)^*,
  \]
  the ordinary generating function for this set of words can be computed as
  \[
    F(x, y, z)
      = \sum_{n, m, k \geq 0} \left(\text{\parbox{.28\linewidth}{
          \# of these words with $n$ occurrences of $c_0$, $m$ occurrences of $c_1$ and $k$ occurrences of $t_{01}$
        }}\right) x^n y^m z^k
      = \frac{1}{1 - \frac{x}{1+x} - \frac{y}{1+y} - z}.
  \]

  Any bicoloured graph can be represented canonically as a word in $c_0$, $c_1$ and $t_{01}$ with no occurrence of $c_0 c_0$ or $c_1 c_1$ together with a clone set for each $c_0$ and $c_1$ and a tangle for each $t_{01}$, so it follows from standard generating function theory, that we have the equations
  \begin{align*}
    \Bu(x, y) &= F\big(\Cu(x), \Cu(y), \Tu(x, y)\big) = \frac{1}{1 - x - y - \Tu(x,y)} \\
    \Bl(x, y) &= F\big(\Cl(x), \Cl(y), \Tl(x, y)\big) = \frac{1}{e^{-x} + e^{-y} - 1 - \Tl(x,y)}.
  \end{align*}
  Solving these equations for $\Tu(x, y)$ and $\Tl(x, y)$ gives the expressions in the statement of the theorem.
\end{proof}

\subsection{Skeleta}

We now turn to the determination of the number of skeleta with a given number of clone sets and a given number of tangles. As with the bicoloured graphs of the previous subsection, it will be convenient to represent skeleta as words, this time over the alphabet
\[
  \Sigma = \{c_i \mid i \in \NN\} \cup \{t_{i\,i+1} \mid i \in \NN\}
\]
of \autoref{def:dep-alpha}. Unlike the case of bicoloured graphs, there is in general more than one natural representative word for a skeleton, so we will need to pick a canonical representative for each skeleton.

Let $S = (V, B, \ell', E')$ be a skeleton. We already have a notion of \emph{left} and \emph{right} for clone sets and tangles (see \autoref{def:lr}) which gives a partial ordering of the blocks of the canonical partition, so we can consider a listing
\[
  X_1, X_2, X_3, \ldots, X_k
\]
of the blocks $X_i \in B$ where $X_i$ appears before $X_j$ whenever $X_i$ is left of $X_j$. This is exactly a linear extension of the left-right partial ordering. If we replace each clone set at level $i$ in this list by the letter $c_i$ and each tangle at levels $i, i+1$ by the letter $t_{i\,i+1}$, then we obtain a possible word in $\Sigma^*$ which represents the skeleton.

\begin{example}\label{ex:skeleta}
  The two representatives in $\Sigma^*$ for the skeleton given in
  \autoref{fig:canonicalptn-skeleton} are $c_0 c_1 c_2 t_{01} c_3 c_2$ and $c_0 c_1 c_2 c_3 t_{01} c_2$.
\end{example}

As noted in \autoref{rem:dependence}, the labelled digraph $(B, \ell', E')$ on the clone sets and tangles of $S$, which also captures the left-right partial ordering, is a special case of a dependence graph. For such a digraph, we can characterize the set of possible words which represent it: they form the \emph{trace} of the dependence graph~\cite[Section~2.3]{Traces}, which is an equivalence class of words under the commutation relations
\begin{alignat*}{2}
  c_i c_j &= c_j c_i, &\qquad&\text{if $\abs{i - j} \geq 2$}, \\
  c_i t_{j\,j+1} &= t_{j\,j+1} c_i, &&\text{if $i \leq j - 2$ or $i \geq j + 3$}, \\
  t_{i\,i+1} t_{j\,j+1} &= t_{j\,j+1} t_{i\,i+1}, &&\text{if $\abs{i - j} \geq 3$},
\end{alignat*}
given by the complement of the dependence relation $D$ of \autoref{def:dep-alpha}.

\begin{definition}[alphabetic ordering]\label{def:ordering}
  The lexicographically maximal representative of a skeleton $S = (V, B, \ell', E')$ is the lexicographically maximal word in the trace of its dependence graph $(B, \ell', E')$, where the ordering on the letters of $\Sigma$ is
  \[
    c_0 < t_{01} < c_1 < t_{12} < c_2 < t_{23} < c_3 < t_{34} < \cdots.
  \]
\end{definition}

\begin{example}
  Of the two skeleton representatives given in \autoref{ex:skeleta}, the lexicographically maximal one is $c_0 c_1 c_2 c_3 t_{01} c_2$.
\end{example}

The following proposition characterizes which words are lexicographically maximal representatives of skeleta.

\begin{proposition}\label{prop:lexmax}
  Let $w \in \Sigma^*$ be a word over the alphabet
  \[
    \Sigma = \{c_i \mid i \in \NN\} \cup \{t_{i\,i+1} \mid i \in \NN\}
  \]
  from \autoref{def:dep-alpha}. Then, $w$ is the lexicographically maximal representative of \emph{some} skeleton iff:
  \begin{enumerate}[({W}1)]
    \item\label{item:W-start} either $w = \epsilon$ is the empty word, or its first letter is $c_0$ or $t_{01}$;
    \item\label{item:W-pairs} every pair of consecutive letters of $w$ is of the form
      \begin{alignat*}{2}
        c_i & c_j &\qquad&\text{for $j \leq i + 1$; or} \\
        c_i & t_{j\,j+1} &&\text{for $j \leq i + 1$; or} \\
        t_{i\,i+1} & c_j &&\text{for $j \leq i + 2$; or} \\
        t_{i\,i+1} & t_{j\,j+1} &&\text{for $j \leq i + 2$; and}
      \end{alignat*}
    \item\label{item:W-avoid} there is no pair of consecutive letters of $w$ of the form $c_i c_i$ for $i \in \NN$.
  \end{enumerate}
\end{proposition}

\begin{proof}
  We have to show how the properties~\ref{item:S-edges}--\ref{item:S-indirect} of \autoref{def:skel} relate to the conditions~\ref{item:W-start}--\ref{item:W-avoid} through the translation between skeleta and their lexicographically maximal representative words.

  Given the data for a skeleton $S = (V, B, \ell', E')$, the vertex-labelled directed graph $g = (B, \ell', E')$ is a dependence graph in the sense of~\cite[Lemma~2.4.1]{Traces} with respect to the dependence alphabet $\Gamma = (\Sigma, D)$ of \autoref{def:dep-alpha}. Then, according to~\cite[Definition~2.3.3]{Traces}, the representatives (not necessarily lexicographically maximal) for $S$ are exactly given by the \emph{topological orderings} of the digraph $g$, that is, the words obtained by repeatedly choosing a source vertex of $g$ (that is, a block $X \in B$ with in-degree zero), recording its label and deleting the vertex, until the empty graph is obtained. So, since properties~\ref{item:S-edges} and~\ref{item:S-acyclic} are the definition of a dependence graph, they are essentially given for free.

  As noted in~\cite[Section~5]{AK}, out of the representative words for a dependence graph, the lexicographically maximal one is exactly the one obtained by always choosing the source vertex with the \emph{largest} label in the procedure for topological ordering.

  From this observation, we can show that a representative is lexicographically maximal iff Condition~\ref{item:W-pairs} holds as follows. Suppose there is a pair of consecutive letters in the lexicographically maximal representative word which are the labels of two blocks $X_i, X_{i+1} \in B$. Then, either there is an edge $X_i \to X_{i+1}$, in which case the labels $\ell'(X_i)$ and $\ell'(X_{i+1})$ are part of the dependence relation $D$; or there is no such edge, in which case both $X_i$ and $X_{i+1}$ are sources at the $i$th step of topological sorting, and we choose $X_i$ because it has the larger label. In either case, Condition~\ref{item:W-pairs} holds. Conversely, consider a non-maximal representative word $w$. It must be obtained by topological sorting where we don't always choose the source vertex with the largest label. If this happens at step $i$, let $X_j \in B$ be the source vertex with maximal label at step $i$. Then, all the source vertices at step $i$ have a smaller label than $X_j$, and none of the vertices $X_i, X_{i+1}, \ldots, X_{j-1}$ have an edge to $X_j$. Given the structure of the dependence relation $D$ as essentially a union of non-nested intervals in the ordering of \autoref{def:ordering}, it follows that none of the labels $\ell'(X_i), \ell'(X_{i+1}), \ldots, \ell'(X_{j-1})$ can be greater than $\ell'(X_j)$. In particular, $X_{j-1}$ has a smaller label than $X_j$ and there is no edge between them, so the labels $\ell'(X_{j-1})$ and $\ell'(X_j)$ are a pair of consecutive letters in $w$ which fail Condition~\ref{item:W-pairs}.

  Given that the words which satisfy Condition~\ref{item:W-pairs} are the lexicographically maximal representatives of dependence graphs (that is, having properties~\ref{item:S-edges} and~\ref{item:S-acyclic}), it is easy to verify that Property~\ref{item:S-source} holds iff Condition~\ref{item:W-start} holds: since the labels $c_0$ and $t_{01}$ are the smallest labels in the ordering of \autoref{def:ordering}, they can only be chosen during the first step of lexicographically maximal topological sorting if the corresponding block $X \in B$ is the \emph{only} source vertex of the dependence graph.

  Finally, we check the equivalence of Property~\ref{item:S-indirect} and Condition~\ref{item:W-avoid}. Suppose Property~\ref{item:S-indirect} holds, and consider two block $X_i, X_j \in B$ which are clone sets on the same level, with an edge $X_i \to X_j$ in the dependence graph $g$. Then, there is some block $Y \in B$ with $X_i \to Y \to X_j$, and during topological sorting, the vertex $X_j$ cannot become a source vertex immediately after deleting $X_i$, since it must still have $Y$ as an in-neighbour; thus, the labels $\ell'(X_i)$ and $\ell'(X_j)$ do not appear consecutively in the representative word, as prescribed by Condition~\ref{item:W-avoid}. Conversely, suppose Property~\ref{item:S-indirect} fails for two clone sets $X_i, X_j \in B$ on the same level with $X_i \to X_j$ in $g$, and there is no intermediate block $Y \in B$ with $X_i \to Y \to X_j$. Then, during topological sorting, after $X_i$ is chosen and deleted, $X_j$ must become a source vertex, and in fact must be the only vertex becoming a source vertex. Since it has the same label as $X_i$, $X_j$ must be picked as the source vertex with the largest label. Thus, the labels $\ell'(X_i)$ and $\ell'(X_j)$ appear consecutively in the representative word, violating Condition~\ref{item:W-avoid}.
\end{proof}

We now use this characterization of lexicographically maximal representatives to count them, and hence count skeleta.

\begin{proposition}\label{prop:genfunc-skeleta}
  Let
  \[
    S(c, t)
      = \sum_{n, m \geq 0} \left(\text{\parbox{.33\linewidth}{
          \# of skeleta with $n$ clone sets and $m$ tangles
        }}\right) c^n t^m
  \]
  be the ordinary generating function for skeleta. Then, $S(c, t)$ is the unique formal power series solution of the equation
  \[
    S(c, t) = 1 + \frac{c}{1 + c} S(c, t)^2 + t S(c, t)^3.
  \]
\end{proposition}

\begin{proof}
  The formal power series equation above can be turned into a recursive definition for the coefficients of $S(c, t)$, so the fact that the equation has a unique solution is clear. Thus, it suffices to show that $S(c, t)$ is indeed the ordinary generating function for skeleta, or equivalently, for lexicographically maximal representatives of skeleta. We proceed by giving a recursive decomposition of the set of lexicographically maximal representatives of skeleta.

  For each $k \in \NN$, let $\skel_k$ be the set of words $w$ over the truncated alphabet
  \[
    \Sigma_k = \{c_i \mid i \geq k\} \cup \{t_{i\,i+1} \mid i \geq k\}
  \]
  such that
  \begin{enumerate}[({W$_k$}1)]
    \item\label{item:Wk-start} either $w = \epsilon$ is the empty word, or its first letter is $c_k$ or $t_{k\,k+1}$;
    \item\label{item:Wk-pairs} every pair of consecutive letters of $w$ is of the form
      \begin{alignat*}{2}
        c_i & c_j &\qquad&\text{for $j \leq i + 1$; or} \\
        c_i & t_{j\,j+1} &&\text{for $j \leq i + 1$; or} \\
        t_{i\,i+1} & c_j &&\text{for $j \leq i + 2$; or} \\
        t_{i\,i+1} & t_{j\,j+1} &&\text{for $j \leq i + 2$; and}
      \end{alignat*}
    \item\label{item:Wk-avoid} there is no pair of consecutive letters of $w$ of the form $c_i c_i$ for $i \in \NN$,
  \end{enumerate}
  so that, by \autoref{prop:lexmax}, $\skel_0$ is the set of lexicographically maximal representatives of skeleta, and $\skel_k$ is a version of $\skel_0$ with all indices shifted up by $k$. Also, let $\skel_{k,c}$ be the set of words in $\skel_k$ which start with the letter $c_k$, and $\skel_{k,t}$ be the set of words in $\skel_k$ which start with the letter $t_{k\,k+1}$. Then, for each $k \in \NN$, we have the set decompositions
  \begin{align}
    \skel_k &= \{\epsilon\} \sqcup \skel_{k,c} \sqcup \skel_{k,t} \label{eq:ssk}\\
    \skel_{k,c} &= \{c_k\} \skel_{k+1} \skel_k \setminus \{c_k\} \{\epsilon\} \skel_{k,c} \label{eq:sskc}\\
    \skel_{k,t} &= \{t_{k\,k+1}\} \skel_{k+2} \skel_{k+1} \skel_k, \label{eq:sskt}
  \end{align}
  which we now justify.

  Equation~\ref{eq:ssk} is simply a rephrasing of Condition~\ref{item:Wk-start}.

  The first term in Equation~\ref{eq:sskc}, $\{c_k\} \skel_{k+1} \skel_k$, accounts for the fact that, according to Condition~\ref{item:Wk-pairs} and the restricted alphabet $\Sigma_k$, the second letter of a word $w$ in $\skel_{k,c}$ can only be $c_{k+1}$ or $t_{k+1\,k+2}$ if it exists; and furthermore, the word $w$ can be uniquely decomposed as $w = c_k w_{k+1} w_k$, where $w_{k+1}$ is a word in $\skel_{k+1}$ and $w_k$ is a word in $\skel_k$ by looking for the first occurrence, if any, of the letters $c_k$ or $t_{k\,k+1}$. The second term, $\{c_k\} \{\epsilon\} \skel_{k,c}$, accounts for Condition~\ref{item:Wk-avoid}, since we have to exclude the word $w = c_k w_{k+1} w_k$ exactly when $w_{k+1} = \epsilon$ and $w_k$ starts with $c_k$ to avoid having consecutive letters equal to $c_k c_k$.

  Equation~\ref{eq:sskt} similarly accounts for the fact that a first letter of $t_{k\,k+1}$ in a word $w \in \skel_{k,t}$ can only be followed by one of $c_{k+1}$, $t_{k+1\,k+2}$, $c_{k+2}$ or $t_{k+2\,k+3}$. Then, $w$ can be decomposed uniquely as $w = t_{k\,k+1} w_{k+2} w_{k+1} w_k$, where $w_{k+2} \in \skel_{k+2}$, $w_{k+1} \in \skel_{k+1}$ and $w_k \in \skel_k$ by looking for the first occurrence, if any, of a letter in $\Sigma_{k+1} \setminus \Sigma_{k+2}$ and then of a letter in $\Sigma_k \setminus \Sigma_{k+1}$.

  Since each $\skel_k$ is a shifted version of $\skel_0$, they all have the same ordinary generating function with respect to number of clone sets and number of tangles (regardless of levels). Thus, the decomposition equations above turn into a system of equations for $S(c, t)$, which can be solved to obtain the stated equation.
\end{proof}

This concludes the proofs of all the ingredients needed for \autoref{thm:enum}.

\begin{figure}[b]
\centering
\fbox{\begin{minipage}{.95\linewidth}
Consider the 26-vertex \tpo-free poset $P$ with 10 blocks whose skeleton is shown below. Only some of the edges between blocks are drawn, with the others implied.
\[\includegraphics[scale=.75]{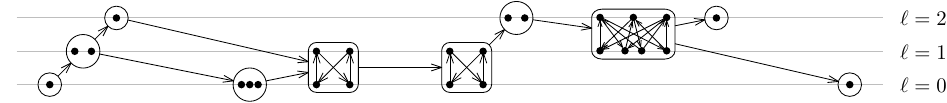}\]
The lexicographically maximal representative for the skeleton of $P$ is \linebreak $w = c_0 c_1 c_2 c_0 t_{01} t_{01} c_2 t_{12} c_2 c_0$, illustrated below.
\[\includegraphics[scale=.75]{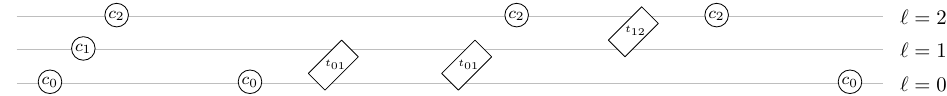}\]
The decorated Dyck path associated with $w$ is below.
\[\includegraphics[scale=.75]{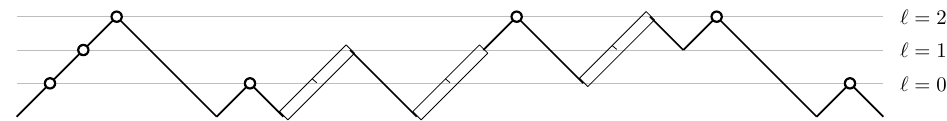}\]
\end{minipage}}
\caption{An example of the decorated Dyck paths discussed in \autoref{rem:dyck}.}
\label{fig:dyck}
\end{figure}

\begin{remark}\label{rem:dyck}
  The lexicographically maximal representative for a skeleton can also be written as a `decorated' Dyck path starting at coordinates $(0, 0)$ in the plane and ending at coordinates $(m, 0)$ for some $m \in \NN$ by replacing each letter $c_i$ with an up step in the direction $(1, 1)$ from $(i, j)$ to $(i+1, j+1)$ for some $j$, replacing each letter $t_{i\,i+1}$ with a double up step in the direction $(2, 2)$ from $(i, j)$ to $(i+2, j+2)$ for some $j$, and filling in the gaps between the resulting segments with down steps in the $(1, -1)$ direction. See \autoref{fig:dyck} for an example.
  When lexicographically maximal representatives are written in this way, the decomposition given in equations~\ref{eq:ssk}--\ref{eq:sskt} corresponds to a natural decomposition of the associated decorated Dyck paths.
\end{remark}

\section{Asymptotics}\label{sec:asympt}

In this section we determine the asymptotics for the number of labelled and unlabelled \tpo-free posets.
Recall that the (univariate) exponential generating
function for
labelled bicoloured graphs is $\Bl(x) = \sum_{n\geq 0} \sum_{i=0}^n
\binom{n}{i} 2^{i(n-i)} \frac{x^n}{n!}$. Let
\[
\bl(n) = [x^n / n!] \, \Bl(x) =
\sum_{i=0}^n \binom{n}{i} 2^{i(n-i)}
\]
be the number of bicoloured graphs on $n$ labelled vertices.
Lewis and Zhang
\cite[Proposition~9.1]{LZ} gave asymptotics for these coefficients.

\begin{proposition}[Lewis and Zhang] \label{prop:estbip}
There exist constants $\alpha_1$ and $\alpha_2$ such that
\[
\bl(2k) \sim \alpha_1 \binom{2k}{k} 2^{k^2}
\qquad \text{and} \qquad
\bl(2k+1) \sim \alpha_2 \binom{2k+1}{k} 2^{k(k+1)}.
\]
\end{proposition}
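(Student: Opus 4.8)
The plan is to estimate $\bl(n) = \sum_{i=0}^n \binom{n}{i} 2^{i(n-i)}$ by a standard Laplace-type analysis: the product $\binom{n}{i} 2^{i(n-i)}$ is dominated by the term $2^{i(n-i)}$, which is maximized when $i(n-i)$ is largest, i.e. at $i = \lfloor n/2 \rfloor$. So the idea is to pull out the central term and show that the remaining sum is a convergent correction factor. Writing $n = 2k$ (the odd case $n = 2k+1$ is analogous), set $i = k + j$, so $i(n-i) = (k+j)(k-j) = k^2 - j^2$. Then
\[
  \bl(2k) = 2^{k^2} \sum_{j = -k}^{k} \binom{2k}{k+j} 2^{-j^2}.
\]
First I would observe that $\binom{2k}{k+j} \leq \binom{2k}{k}$ for all $j$, so each summand is at most $\binom{2k}{k} 2^{-j^2}$, and I would isolate the $j = 0$ term, which contributes exactly $\binom{2k}{k}$. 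The claim $\bl(2k) \sim C_1 \binom{2k}{k} 2^{k^2}$ is therefore equivalent to showing that
\[
  R_k := \sum_{j = -k}^{k} \frac{\binom{2k}{k+j}}{\binom{2k}{k}} 2^{-j^2}
\]
converges to a positive constant $C_1$ as $k \to \infty$.

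The key step is to control the ratio $\binom{2k}{k+j} / \binom{2k}{k}$. For fixed $j$ one has $\binom{2k}{k+j}/\binom{2k}{k} = \prod_{m=1}^{j} \frac{k-m+1}{k+m} \to 1$ as $k \to \infty$, so each individual summand of $R_k$ tends to $2^{-j^2}$. To pass from termwise convergence to convergence of the sum, I would invoke dominated convergence: since $\binom{2k}{k+j}/\binom{2k}{k} \leq 1$ uniformly in $k$ and $j$, the summand is dominated by $2^{-j^2}$, which is summable over $j \in \ZZ$. Hence $R_k \to \sum_{j \in \ZZ} 2^{-j^2} =: C_1 > 0$ (a Jacobi theta value, but we only need that it is a finite positive constant). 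This gives the even case. For $n = 2k+1$, set $i = k + j$ so that $i(n-i) = (k+j)(k+1-j) = k(k+1) + j - j^2$, and pull out $2^{k(k+1)} \binom{2k+1}{k}$; the same argument with the substitution showing $\binom{2k+1}{k+j}/\binom{2k+1}{k} \to$ (a product tending to a limit) and domination by a summable sequence in $j$ yields $\bl(2k+1) \sim C_2 \binom{2k+1}{k} 2^{k(k+1)}$ for an appropriate constant $C_2$.

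The main obstacle is making the domination step fully rigorous at the boundary of the summation range, where $j$ is comparable to $k$: there $\binom{2k}{k+j}/\binom{2k}{k}$ is tiny but one must be sure the crude bound $\leq 1$ really does suffice (it does, since $2^{-j^2}$ decays superexponentially and already controls the tail irrespective of the binomial factor). A secondary subtlety is simply bookkeeping the parity case $n = 2k+1$: the exponent $i(n-i)$ is not symmetric about a single integer, so one should note that it is maximized at both $i = k$ and $i = k+1$, contributing a factor that the constant $C_2$ absorbs; recentering at $i = k$ as above keeps the algebra clean. Beyond that, the argument is routine, and the resulting constants $C_1$ and $C_2$ are the ones referenced in the subsequent asymptotic analysis of $\pu(n)$ and $\pl(n)$.
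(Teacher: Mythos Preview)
Your argument is correct: the substitution $i = \lfloor n/2 \rfloor + j$, the uniform bound $\binom{n}{\lfloor n/2 \rfloor + j}/\binom{n}{\lfloor n/2 \rfloor} \le 1$, and dominated convergence against the summable sequence $2^{-j^2}$ (respectively $2^{j-j^2}$) together give the claimed asymptotics, with the explicit constants $C_1 = \sum_{j \in \ZZ} 2^{-j^2}$ and $C_2 = \sum_{j \in \ZZ} 2^{\,j-j^2}$.

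There is nothing to compare against here: the paper does not supply its own proof of this proposition but simply quotes it as \cite[Proposition~9.1]{LZ}. Your Laplace-type argument is exactly the standard one and is presumably what Lewis and Zhang do as well; in any case it is complete as written, modulo the slightly sketchy treatment of the odd case, which you have correctly identified and which goes through without difficulty once you note that $2^{j-j^2}$ is still summable over $\ZZ$.
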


Recall that the ordinary generating
function for unlabelled bicoloured graphs
up to isomorphism is $\Bu(x)=1+2x+4x^2+8x^3+17x^4 + \cdots$. Let
\[\bu(n) = [x^n] \,
\Bu(x)\]
be the number of such graphs with $n$
vertices. From \cite{JP}, almost all unlabelled bicoloured graphs have a trivial automorphism group, so we can relate the asymptotics of $\bu(n)$ and $\bl(n)$ as follows.

\begin{proposition} \label{prop:labVSunlab}
If $\bu(n)$ is the number of bicoloured
graphs with $n$ unlabelled vertices and $\bl(n)$ is the
number of bicoloured graphs with $n$ labelled vertices then
\[
n! \cdot \bu(n) \sim
\bl(n).
\]
\end{proposition}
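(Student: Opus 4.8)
The plan is to prove the asymptotic equivalence $n!\cdot\bu(n)\sim\bl(n)$ via a standard orbit-counting (Burnside) argument, using the fact cited just before the statement that asymptotically almost all unlabelled bicoloured graphs have trivial automorphism group. First I would set up the group action: the group $G_n = S_n$ (or, more precisely, the group of colour-preserving relabellings, which for a bicoloured graph on $n$ vertices with colour classes of sizes $i$ and $n-i$ is $S_i\times S_{n-i}$ summed over the choice of bipartition) acts on the set $\mathcal{B}_n$ of labelled bicoloured graphs on $n$ vertices, and the orbits are exactly the isomorphism classes, so $\bu(n)=\abs{\mathcal{B}_n/G}$. Each orbit of a graph $\Gamma$ has size $\abs{G}/\abs{\aut(\Gamma)}$, where $\aut(\Gamma)$ is the colour-preserving automorphism group. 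Summing orbit sizes gives $\bl(n)=\sum_{\Gamma}\abs{G}/\abs{\aut(\Gamma)}$, where the sum is over isomorphism classes; equivalently $\bl(n)=\abs{G}\sum_{\Gamma}1/\abs{\aut(\Gamma)}$.

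Next I would split this sum according to whether $\aut(\Gamma)$ is trivial. Let $\bu'(n)$ be the number of isomorphism classes with trivial automorphism group and $\bu''(n)=\bu(n)-\bu'(n)$ the number with nontrivial automorphism group. The trivial-automorphism classes contribute exactly $\abs{G}\cdot\bu'(n)$ to $\bl(n)$, while the remaining classes contribute $\abs{G}\sum_{\aut(\Gamma)\neq 1}1/\abs{\aut(\Gamma)}$, which is at most $\abs{G}\cdot\bu''(n)/2$ (since a nontrivial automorphism group has order at least $2$) and at least $0$. Hence
\[
  \abs{G}\,\bu'(n) \;\le\; \bl(n) \;\le\; \abs{G}\,\bu'(n) + \tfrac12\,\abs{G}\,\bu''(n).
\]
Dividing through by $\abs{G}$, we get $\bu'(n)\le \bl(n)/\abs{G}\le \bu'(n)+\tfrac12\bu''(n)$. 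Now the hypothesis (proved in the Appendix) that almost all unlabelled bicoloured graphs have trivial automorphism group says precisely $\bu''(n)/\bu(n)\to 0$, equivalently $\bu'(n)\sim\bu(n)$ and $\bu''(n)=o(\bu(n))$. Therefore $\bl(n)/\abs{G} = \bu'(n)(1+o(1)) = \bu(n)(1+o(1))$, i.e. $\bl(n)\sim\abs{G}\,\bu(n)$.

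Finally I would reconcile the group order $\abs{G}$ with the claimed factor $n!$. If one uses the bicoloured-graph convention where an ordered bipartition is recorded, the natural relabelling group on a graph with parts of size $i$ and $n-i$ is $S_i\times S_{n-i}$, and the asymptotic statement in the form $n!\,\bu(n)\sim\bl(n)$ then requires summing over $i$ and using that the dominant contribution to $\bl(n)$ comes from the balanced bipartition $i\approx n/2$; here it is cleaner to note that the whole argument above applies level by level in $i$, and that $\binom{n}{i}=n!/(i!(n-i)!)$ supplies exactly the missing factorials, so that $\sum_i \binom{n}{i}2^{i(n-i)}$ on the labelled side matches $n!$ times the unlabelled count. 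I expect the main obstacle (and the only genuinely delicate point) to be precisely this bookkeeping of the correct symmetry group and the factor $n!$ versus $\abs{S_i\times S_{n-i}}$ — the Burnside estimate itself is routine once $\bu''(n)=o(\bu(n))$ is granted, and that input is supplied by the Appendix. A clean way to finesse the bookkeeping is to phrase everything in terms of the quotient $\bl(n)/n!$ directly, observing that the action of $S_n$ on the $\bl(n)/n!$... — more carefully, that the number of \emph{labelled} bicoloured graphs counted with the ordered bipartition, divided by $n!$, is an average of orbit data to which the same trivial-automorphism asymptotic applies.
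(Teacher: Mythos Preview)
Your first two paragraphs contain a correct argument: orbit--stabilizer gives $\bl(n)=n!\sum_{\Gamma}1/\abs{\aut(\Gamma)}$ (sum over isomorphism classes), and splitting according to whether $\aut(\Gamma)$ is trivial yields $n!\,\bu'(n)\le\bl(n)\le n!\,\bu(n)$, so $\bu''(n)=o(\bu(n))$ gives the result. This is exactly the implication the paper's main text gestures at in the sentence just before the proposition.

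Your third paragraph, however, is self-inflicted confusion. The group acting on the set of labelled bicoloured graphs on $\{1,\ldots,n\}$ is $S_n$ itself: a permutation $\pi$ sends $(A,B,E)$ to $(\pi(A),\pi(B),\pi(E))$, and isomorphism of labelled bicoloured graphs is precisely the $S_n$-orbit relation. Hence $\abs{G}=n!$ on the nose, with no reconciliation or summing over $i$ required. The subgroup $S_i\times S_{n-i}$ appears only as the stabilizer of a fixed ordered bipartition $(A,B)$ with $\abs{A}=i$, not as the acting group; the $\binom{n}{i}$ in the formula for $\bl(n)$ is already the orbit of the bipartition under $S_n$. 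Delete the third paragraph and the proof is complete.

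By way of comparison: the paper's Appendix does not separate ``first prove almost all are rigid, then deduce the asymptotic''. Instead it applies Burnside directly to write
\[
  n!\,\bu(n)=\sum_{k}\binom{n}{k}2^{k(n-k)}\sum_{\pi\in S_k\times S_{n-k}}2^{-r(\pi)}
\]
and bounds the contribution of non-identity $\pi$ by estimating the redundancy $r(\pi)$, splitting into balanced and unbalanced bipartitions. The Appendix itself observes that $n!\,\bu(n)\sim\bl(n)$ is \emph{equivalent} to the trivial-automorphism statement you take as input, so the hard content is the same either way; the paper simply packages the estimate and the deduction together rather than invoking one to prove the other. (In particular, note that citing the Appendix for ``almost all unlabelled bicoloured graphs are rigid'' and then deducing the proposition is mildly circular in this paper's own logical structure, since the Appendix proves the proposition directly.)
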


Lewis and Zhang~\cite[Theorem~9.2]{LZ} also gave the asymptotics for the number of
(weakly) graded \tpo-free posets with $n$ labelled
vertices\footnote{Recall that a poset $P$ is {\em weakly graded} if there exists a
rank function $\rho: P \to \{0,1,2,\ldots\}$ such that if $a<b$ is a
covering relation then $\rho(b)-\rho(a)=1$. A poset is {\em strongly
  graded} if it is weakly graded, minimal vertices have
the same rank, and maximal vertices have the same rank (\textit{i.e.}, all
maximal chains in the poset have the same number of vertices).}. Using \autoref{prop:labVSunlab} and their method of
proof one gets the asymptotics for the number of (weakly) graded \tpo-free posets with $n$ unlabelled vertices.

\begin{theorem}[Lewis and Zhang]\label{thm:asymptGP}
Let $\pl^{\textnormal{g}}(n)$ and
$\pu^{\textnormal{g}}(n)$ be the number of strongly graded
\tpo-free posets with $n$ labelled vertices and $n$ unlabelled
vertices respectively, and let $\pl^{\textnormal{w}}(n)$ and
$\pu^{\textnormal{w}}(n)$ be the corresponding
numbers for weakly graded posets. Then
\begin{enumerate}[(i),noitemsep]
\item $\pl^{\textnormal{g}}(n) \sim \pl^{\textnormal{w}}(n) \sim \bl(n)$, and
\item $\pu^{\textnormal{g}}(n) \sim \pu^{\textnormal{w}}(n) \sim \bu(n)$.
\end{enumerate}
\end{theorem}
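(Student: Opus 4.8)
The plan is to prove both parts by squeezing the graded and weakly‑graded counts between $\bl(n)$ (respectively $\bu(n)$) from below and above, exploiting the fact that $(3+1)$-free posets of height at most $2$ are essentially bicoloured graphs, together with the fact that taller posets are asymptotically negligible. Throughout, every strongly graded poset is weakly graded, so $\pl^{\textnormal{g}}(n) \le \pl^{\textnormal{w}}(n)$ and $\pu^{\textnormal{g}}(n) \le \pu^{\textnormal{w}}(n)$; it therefore suffices to bound the smaller (graded) count from below and the larger (weakly graded) count from above by the same quantity.

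For the lower bound in part (i), I would identify the strongly graded $(3+1)$-free posets of height exactly $2$ with bicoloured graphs having no isolated vertex: the two levels are the colour classes, the order relations are the edges, and the absence of isolated vertices is exactly the requirement that every maximal chain has length $2$. This is a bijection, since the poset recovers its ordered bipartition as the pair of levels. Because a uniformly random bicoloured graph has an isolated vertex with probability tending to $0$ (by inclusion--exclusion, the dominant balanced colourings contribute a factor $2^{-\Theta(n)}$), the number of such graphs is $\sim \bl(n)$. As these posets form a subclass of the graded ones, this gives $\bl(n) \lesssim \pl^{\textnormal{g}}(n) \le \pl^{\textnormal{w}}(n)$.

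For the matching upper bound on $\pl^{\textnormal{w}}(n)$, I would stratify weakly graded $(3+1)$-free posets by height. Those of height at most $2$ are images of bicoloured graphs under the surjection sending each minimal vertex to level $1$, so they number at most $\bl(n)$. The crux is to show that posets of height $\ge 3$ are negligible. Here I would invoke \autoref{prop:two-levels} and \autoref{prop:partial-converse}: once the level sizes $n_1, \dots, n_h$ are fixed, all relations between non‑adjacent levels are forced, and within each pair of adjacent levels the bipartite relation is heavily constrained, since any two vertices on a common level must have nested upsets or nested downsets. Although the raw number of adjacent incidences $\sum_i n_i n_{i+1}$ can still reach $n^2/4$ (the same exponential order as $\bl(n)$), this nesting constraint forces the number of admissible configurations with a third level to be $o(\bl(n))$. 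Quantifying this loss is the main obstacle, and I would carry it out following the asymptotic analysis of Lewis and Zhang~\cite{LZ}. Combining the two bounds squeezes $\pl^{\textnormal{g}}(n) \sim \pl^{\textnormal{w}}(n) \sim \bl(n)$, which is part (i).

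Finally, for part (ii) I would pass from labelled to unlabelled counts by a rigidity argument. By part (i), asymptotically all graded and weakly graded $(3+1)$-free posets have height $2$ and hence are bicoloured graphs; a poset automorphism must preserve rank and therefore preserves the bipartition, so it is a colour‑preserving graph automorphism. Since almost all bicoloured graphs are rigid (the same random‑graph input underlying \autoref{prop:labVSunlab}), almost all graded and weakly graded $(3+1)$-free posets have trivial $\aut$. The counting identity $\pl(n) = n!\sum 1/|\aut(P)|$, summed over isomorphism classes, then yields $n!\,\pu^{\textnormal{g}}(n) \sim \pl^{\textnormal{g}}(n)$ and $n!\,\pu^{\textnormal{w}}(n) \sim \pl^{\textnormal{w}}(n)$, exactly as in the proof of \autoref{prop:labVSunlab}. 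Dividing the relations of part (i) and the relation $n!\,\bu(n) \sim \bl(n)$ of \autoref{prop:labVSunlab} through by $n!$ gives $\pu^{\textnormal{g}}(n) \sim \pu^{\textnormal{w}}(n) \sim \bu(n)$, completing part (ii).
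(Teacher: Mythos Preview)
The paper does not actually prove this theorem: part~(i) is cited directly as \cite[Theorem~9.2]{LZ}, and for part~(ii) the paper only remarks that it follows from \autoref{prop:labVSunlab} together with ``their method of proof.'' So there is no in-paper argument to compare against beyond that one sentence.

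Your treatment of part~(ii) matches the paper's hint exactly: reduce to the labelled case via rigidity of bicoloured graphs, which is precisely \autoref{prop:labVSunlab}. That part is fine.

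For part~(i), your lower bound via height-$2$ strongly graded posets is correct and self-contained. The upper bound, however, is not: you stratify by height, bound the height-$\le 2$ contribution by $\bl(n)$, and then for height $\ge 3$ you invoke \autoref{prop:partial-converse} and assert that the nesting constraint ``forces the number of admissible configurations with a third level to be $o(\bl(n))$,'' immediately conceding that ``quantifying this loss is the main obstacle'' and deferring to~\cite{LZ}. That is the entire content of the theorem, so at this point your proposal, like the paper, is simply citing Lewis--Zhang rather than proving anything. Be aware, too, that the actual Lewis--Zhang argument (and this paper's own proof of the stronger \autoref{thm:asymptP}) does \emph{not} proceed by a direct combinatorial bound on tall posets; it goes through an explicit generating-function identity and then applies Bender's transfer theorem (\autoref{thm:Bender}). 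Your nesting heuristic via \autoref{prop:partial-converse} is a plausible starting point for an alternative proof, but as written it is not an argument, and it is not the route taken in either reference.
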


We are ready to state the main result of this section, which gives the
asymptotics for the number of \tpo-free posets with labelled and
unlabelled vertices respectively.

\pagebreak

\begin{theorem} \label{thm:asymptP}
If $\pl(n)$ is the number of \tpo-free posets with
$n$ labelled vertices and $\pu(n)$ is the number of
\tpo-free posets with $n$ unlabelled vertices then
\begin{enumerate}[(i),noitemsep]
\item $\pl(n) \sim \bl(n)$, and
\item $\pu(n) \sim \bu(n)$.
\end{enumerate}
\end{theorem}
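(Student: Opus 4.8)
The plan is to show that the contribution of non-graded (equivalently, here, non-$3$-free) $(3+1)$-free posets is asymptotically negligible compared to $\bl(n)$ in the labelled case, and to $\bu(n)$ in the unlabelled case; combined with \autoref{thm:asymptGP} this gives the result. Concretely, I would work from the generating-function identities \eqref{ordgs} and \eqref{expgs}. Writing $\pl(n) = \bl(n) + \big(\pl(n) - \bl(n)\big)$, it suffices to prove $\pl(n) - \bl(n) = o(\bl(n))$, and likewise for the unlabelled counts. Since by \autoref{prop:labVSunlab} and \autoref{thm:asymptGP} the labelled and unlabelled asymptotics are tied together by the factor $n!$, I expect to do the substantive analytic work in the labelled case and then transfer it to the unlabelled case via the same random-graph triviality-of-automorphisms argument cited for \autoref{prop:labVSunlab}.

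The key steps, in order. First, expand the skeleton generating function: from \eqref{eq:RecSkel}, $S(c,t) = 1 + \frac{c}{1+c}S^2 + tS^3$, so as a power series in $t$ we have $S(c,t) = S_0(c) + t\,S_0(c)^3 + O(t^2)$, where $S_0(c) = S(c,0)$ satisfies $S_0 = 1 + \frac{c}{1+c}S_0^2$ (the Catalan-type series, matching the $(3+1)$-and-$(2+2)$-free count). The term independent of $t$ corresponds to $(2+2)$-free posets, which are a sub-family of $3$-free posets and hence already accounted for in $\bl(n)$; so the ``correction'' beyond $\bl(n)$ comes from (a) the $(2+2)$-free $(3+1)$-free posets that are not $3$-free — already handled by \autoref{thm:asymptGP} — and (b) the $O(t)$ terms, i.e.\ posets containing at least one tangle. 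The plan is therefore to bound $[x^n/n!]\,\big(S(e^x-1,\Tl(x,x)) - S(e^x-1,0)\big)$, which by the expansion above is controlled by $\big[t^1\big]S \cdot \Tl(x,x) + (\text{higher order in } \Tl)$. Second, estimate $\Tl(x,x) = 2e^{-x} - 1 - \Bl(x)^{-1}$ (from \autoref{thm:genfcnbiplab} with $y = x$): the point is that a tangle on $m$ vertices is in particular a bicoloured graph, so $\tl(m) := [x^m/m!]\Tl(x,x)$ is at most $\bl(m)$, but more importantly a tangle is a $(2+2)$-connected bipartite structure, which forces it to be ``dense'' and hence rare relative to all $(3+1)$-free posets on the same vertex set when it is large, and to consume at least $4$ vertices when it is small. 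Third, and this is the crux, I would show that inserting even a single tangle into a skeleton costs asymptotically a factor that vanishes: the extremal configurations counted by $\bl(n)$ are essentially a single dense bipartite graph between two levels with the two level-sizes both $\sim n/2$, contributing $\sim \binom{n}{\lfloor n/2\rfloor} 2^{\lfloor n^2/4\rfloor}$; any poset with a tangle either has a tangle occupying $\Omega(n)$ vertices (in which case the tangle, being $(2+2)$-connected, is itself too constrained to reach the $2^{n^2/4}$ order — one shows the number of tangles on $m$ vertices is $o(\bl(m))$, paralleling what Lewis and Zhang show for non-graded graded-type posets), or the tangle is small and the remaining structure is a $(3+1)$-free poset on $n - O(1)$ vertices whose count is $\pl(n - O(1)) = O(\bl(n-4))$, which is $o(\bl(n))$ since $\bl(n-4)/\bl(n) \to 0$ by the explicit form $\bl(2k)\sim C_1\binom{2k}{k}2^{k^2}$ etc.

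I would then assemble these bounds into $\pl(n) - \pl^{\mathrm w}(n) = o(\bl(n))$ and invoke \autoref{thm:asymptGP}(i) to conclude $\pl(n) \sim \bl(n)$; for the unlabelled statement, I would combine the labelled asymptotic just obtained with the fact (same source as \autoref{prop:labVSunlab}) that almost all of the relevant unlabelled structures have trivial automorphism group, so $n!\cdot\pu(n) \sim \pl(n)$, and then $\pu(n) \sim \bl(n)/n! \sim \bu(n)$ by \autoref{prop:labVSunlab} again. The main obstacle I anticipate is Step three: making rigorous the claim that a poset carrying a tangle is asymptotically negligible, because a tangle can be large, and one needs a genuine upper bound on the number of tangles (or of $(2+2)$-connected bicoloured graphs) on $m$ vertices showing it is $o(\bl(m))$ uniformly, together with a convolution estimate over where the tangle sits and how large it is; this is exactly the place where one has to reuse and slightly adapt the density/entropy estimates of Lewis and Zhang rather than prove something genuinely new. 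A secondary technical point is handling the $t^{\geq 2}$ terms and the $\frac{c}{1+c}$ denominators in \eqref{eq:RecSkel} cleanly — but since $S(c,t)$ is a nice algebraic series these are routine once the single-tangle bound is in place.
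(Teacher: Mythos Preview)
Your approach diverges from the paper's, which is a direct application of Bender's transfer theorem (\autoref{thm:Bender}): one writes the generating function for $(3+1)$-free posets as $H(x, F(x))$ with $F = \Bl - 1$ (respectively $F = \Bu - 1$), checks by implicit differentiation of \eqref{eq:RecSkel} that $\partial H/\partial y|_{(0,0)} = 1$, and reads off $\pl(n) \sim \bl(n)$ (respectively $\pu(n) \sim \bu(n)$) immediately. No separate analysis of tangles and no labelled-to-unlabelled transfer is needed.

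Your proposal, by contrast, contains a genuine gap. Expanding $S(c,t)$ in powers of $t$ separates posets by the number of \emph{tangles}, so the $t = 0$ term counts the $(3+1)$-and-$(2+2)$-free posets (semiorders), not bicoloured graphs. These families are unrelated in the way you need: semiorders can have arbitrary height (so they are not ``a sub-family of $3$-free posets''), and conversely a generic bicoloured graph on $n$ vertices is a \emph{single} tangle, since for two random top vertices $a,b$ the event $D_a \subseteq D_b$ has probability tending to $0$, so $a \ttop b$ almost surely. In particular your claimed bound ``the number of tangles on $m$ vertices is $o(\bl(m))$'' is false: from $\Tl(x,x) = 2e^{-x} - 1 - \Bl(x)^{-1}$ and Bender's theorem applied to $\Bl(x)^{-1}$ one gets that the number of labelled tangles on $m$ vertices is asymptotic to $\bl(m)$. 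Hence the quantity you set out to bound, namely $[x^n/n!]\big(S(e^x-1, \Tl(x,x)) - S(e^x-1, 0)\big)$, is itself $\sim \bl(n)$, not $o(\bl(n))$, and your dichotomy ``large tangle (rare) versus small tangle (costs at least four vertices)'' collapses.

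A secondary issue concerns part (ii). You deduce $n!\cdot \pu(n) \sim \pl(n)$ by asserting that almost all unlabelled $(3+1)$-free posets are asymmetric, citing ``the same source as \autoref{prop:labVSunlab}''. But \autoref{prop:labVSunlab} is about bicoloured graphs, not $(3+1)$-free posets; the analogous statement for $(3+1)$-free posets is not available a priori and is essentially equivalent to what you are trying to prove. The paper avoids this by applying Bender's theorem directly to the ordinary generating function, using \autoref{prop:labVSunlab} only to verify Bender's growth hypotheses on the coefficients $\bu(n)$.
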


Combining theorems~\ref{thm:asymptGP} and~\ref{thm:asymptP}, it
follows that almost all \tpo-free posets are (weakly) graded. This
fact may be surprising at first, but it is actually a consequence of
the stronger fact that almost all \tpo-free posets have Hasse
diagrams which are bicoloured graphs, meaning that they have exactly
two levels.

\begin{remark}
In \autoref{table:asympt} we compare the asymptotics for the number of \tpo-free posets on $n$ unlabelled vertices with the known asymptotics for the number of
\tpo-and-\tpt-free posets, \tpt-free posets~\cite{B-MCDK}, \tpo-free posets, and all posets~\cite{KR,JP} on $n$ unlabelled vertices. The asymptotics
compare in the following way:
\[
	\text{\tpo-and-\tpt-free} \, \ll \, \text{\tpt-free} \, \ll \, \text{\tpo-free} \, \ll \, \text{all posets}.
\]
\end{remark}

\begin{table}[b]
\begin{center}
\begin{tabular}{lll}
\toprule
Class of posets & Asymptotics & Asymptotics (base 2) \\ \midrule
\rule{0pt}{3ex} \tpo-and-\tpt-free & ${}=\frac{1}{n+1}\binom{2n}{n}$ & $2^{2n + O(\log n)}$ \\
\rule{0pt}{3ex} \tpt-free & $\beta_1 n! \sqrt{n} \left(\frac{6}{\pi^2}\right)^n$ \cite{B-MCDK} & $2^{\beta_3 n\log n + O(n)}$ \\
\rule{0pt}{3ex} \tpo-free & $\bl(n) / n!$ & $2^{n^2/4-\beta_3 n\log n + O(n)}$ \\
\rule{0pt}{3ex} all posets & $\frac{\beta_2}{n! \sqrt{n}} 2^{n^2/4 +
3n/2}$ \cite{KR,JP} & $2^{n^2/4-\beta_3 n\log n + O(n)}$ \\ \bottomrule
\end{tabular}
\medskip
\caption{Asymptotic comparison of the numbers of
\tpo-and-\tpt-free posets, \tpt-free
posets, \tpo-free posets, and arbitrary
posets with $n$ unlabelled vertices. Here, $\bl(n)$ is the number of bicoloured graphs on $n$ labelled vertices and $\beta_1, \beta_2, \beta_3$ are constants. The asymptotics for \tpo-free posets follows from \autoref{thm:asymptP},
\autoref{prop:labVSunlab}, and \autoref{prop:estbip}.}
\label{table:asympt}
\end{center}
\end{table}

Like the proof of \autoref{thm:asymptGP}, the proof of \autoref{thm:asymptP} relies on the following result of Bender~\cite[Theorem~1]{B}.

\begin{theorem}[Bender] \label{thm:Bender}
Suppose that $F(x) = \sum_{n\geq 1} f_n x^n$, that $H(x,y)$ is a formal
power series in $x$ and $y$, and that $G(x)=\sum_{n\geq 0} g_n x^n =
H(x,F(x))$. Let $C = \left. \frac{\partial H}{\partial y}
\right|_{(0,0)}$. Suppose that
\begin{enumerate}[1.,noitemsep]
\item[1.] $H(x,y)$ is analytic in a neighbourhood of $(0,0)$,
\item[2.] $\lim_{n\to \infty} \frac{f_{n-1}}{f_n} = 0$, and
\item[3.] $\sum_{k=1}^{n-1} |f_kf_{n-k}| = O(f_{n-1})$.
\end{enumerate}
Then
\[
g_n = C\cdot f_n + O(f_{n-1}),
\]
and in particular $g_n \sim C\cdot f_n$.
\end{theorem}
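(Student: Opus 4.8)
The plan is to expand $H$ as a convergent double power series and read off the coefficient of $x^n$ in $G = H(x,F(x))$ term by term, isolating the one contribution that produces $C f_n$ and showing that every other contribution is $O(f_{n-1})$. Since $H$ is analytic near the origin (hypothesis 1), I would write $H(x,y) = \sum_{i,j\ge 0} h_{ij} x^i y^j$ with Cauchy estimates $|h_{ij}| \le M\rho^{-i-j}$ for some $M>0$ and $\rho>0$; note that $C = h_{01}$. Because $F(0)=0$, the composition is a well-defined formal power series and $g_n = \sum_{i+j\le n} h_{ij}\,[x^{n-i}]F(x)^j$ is a finite sum for each $n$. The term $(i,j)=(0,1)$ contributes exactly $h_{01}f_n = C f_n$, and the whole task reduces to bounding the sum of all remaining terms by $O(f_{n-1})$.

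First I would record some consequences of hypotheses 2 and 3 (assuming, as one may after discarding finitely many initial terms, that $f_n>0$ for all $n\ge 1$). Hypothesis 2 makes $f_n$ eventually increasing and, since $\log f_n - \log f_{n-1}\to\infty$, forces $f_n$ to grow faster than any geometric sequence, so $a^{n}=o(f_{n-1})$ for every fixed $a>0$; it also gives, for each $\delta>0$, a bound $f_{m-k}\le \delta^{k} f_m$ valid once $m-k$ is large. These yield a \emph{sub-lemma}: $\sum_{i\ge 0}\rho^{-i}f_{m-i}=O(f_m)$, because the $i=0$ term dominates, the terms with $m-i$ large decay geometrically, and the finitely many terms with $m-i$ small are $O(\rho^{-m})=o(f_m)$. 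The key estimate is then
\[
 [x^n]F(x)^j \le B^{\,j-1} f_{n-j+1} \qquad (j\ge 1),
\]
which I would prove by induction on $j$, taking $B=\max(1,K)$ where $K$ is the implied constant of hypothesis 3: the cases $j=1,2$ are immediate (hypothesis 3 is exactly $[x^m]F^2 = O(f_{m-1})$), and the inductive step writes $F^{j+1}=F\cdot F^{j}$ and collapses the resulting convolution back to one application of hypothesis 3.

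With these tools the remaining contributions to $g_n$ split into three groups. The $j=0$ terms contribute only $h_{n0}=O(\rho^{-n})=o(f_{n-1})$; the terms with $j=1$, $i\ge 1$ contribute $\sum_{i\ge1}h_{i1}f_{n-i}$, which the sub-lemma bounds by $O(f_{n-1})$; and the terms with $j\ge 2$ are controlled by combining the key estimate, the sub-lemma, and the rapid decay of $f_{n-j+1}$ in $j$. Summing the last group gives $g_n = C f_n + O(f_{n-1})$, whence $g_n\sim C f_n$ since $f_{n-1}=o(f_n)$ by hypothesis 2.

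The hard part will be the group $j\ge 2$. After applying the key estimate and the sub-lemma one is left to bound $\sum_{j\ge 2}(B/\rho)^j f_{n-j+1}$ uniformly in $n$, and the difficulty is that the number of terms grows with $n$ while $B/\rho$ may exceed $1$, so no single geometric comparison suffices. I would resolve this with a case split governed by a fixed threshold $N_0=N(\delta)$ chosen so that $B\delta/\rho<1$: for the $j$ with $n-j+1\ge N_0$ the bound $f_{n-j+1}\le \delta^{\,j-2}f_{n-1}$ turns the partial sum into a convergent geometric series times $f_{n-1}$; for the remaining $j$ (those with $n-j+1<N_0$, at most $N_0$ of them) the index $n-j+1$ takes only finitely many small values, so $f_{n-j+1}$ is bounded by a constant while the offending factor $(B/\rho)^{j}\le (B/\rho)^{n}$ is absorbed by the super-geometric growth, $(B/\rho)^n=o(f_{n-1})$. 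It is precisely this simultaneous use of the convolution bound (hypothesis 3) and the super-geometric growth (hypothesis 2) that makes the estimate go through.
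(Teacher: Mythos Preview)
The paper does not give its own proof of this statement: Theorem~\ref{thm:Bender} is quoted from Bender~\cite[Theorem~1]{B} and used as a black box in the proof of Theorem~\ref{thm:asymptP}. So there is nothing in the paper to compare your argument against.

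That said, your proposal is essentially correct and is in fact the classical argument. The key estimate $\bigl|[x^n]F(x)^j\bigr|\le B^{\,j-1}\,|f_{n-j+1}|$ does go through by induction: writing $F^{j+1}=F\cdot F^j$ and reindexing, the convolution becomes $\sum_{k=1}^{N-1}|f_k f_{N-k}|$ with $N=n-j+1$, which hypothesis~3 bounds by $K\,|f_{N-1}|=K\,|f_{n-j}|$, closing the induction. Your treatment of the tail $\sum_{j\ge 2}(B/\rho)^j f_{n-j+1}$ via a threshold split (geometric control for $n-j+1$ large, super-geometric absorption of $(B/\rho)^n$ for the finitely many small indices) is the right move and is exactly where both hypotheses are needed simultaneously. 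One small caveat: the phrase ``discarding finitely many initial terms'' is a bit loose, since $F$ is fixed; what you actually need is that $f_n\ne 0$ for all large $n$ (guaranteed by hypothesis~2) and then to carry absolute values through the key estimate, which you can do since hypothesis~3 is stated with $|f_kf_{n-k}|$. With that adjustment the argument is complete.
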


\begin{proof}[Proof of {\hyperref[thm:asymptP]{Theorem~\ref*{thm:asymptP}(i)}}]
Let $\Hl(x,y)$ be the formal power series in $x$ and $y$ defined by
\begin{equation}\label{eq:Hlbl}
  \Hl(x,y) = S(e^x-1, 2e^{-x} - 1 - (1+y)^{-1}),
\end{equation}
where $S(c,t)$ is the unique formal power series solution of the cubic equation
\begin{equation} \label{eq:cubic}
S(c,t) = 1 + \frac{c}{1+c} S(c,t)^2 + tS(c,t)^3,
\end{equation}
as defined in \autoref{thm:enum}. From Equation~\eqref{ordgs}, we have that
\[
  \Hl(x,\Bl(x)-1)
    = \sum_{n\geq 0} \pl(n) \frac{x^n}{n!}.
\]
In order to apply \autoref{thm:Bender} we first check its three
conditions.
Condition 1, that
$\Hl(x,y)$ be analytic in a neighbourhood of $(0,0)$,
follows from the way it is defined in \eqref{eq:Hlbl}.
Lewis and Zhang~\cite{LZ} verified that the coefficients
$\bl(n) / n!$ of the generating function
$\Bl(x)-1$ satisfy Conditions 2 and 3. That is,
\[
\lim_{n\to \infty}
\frac{n \cdot \bl(n-1)}{\bl(n)} = 0
\]
and
\[
\sum_{k=1}^{n-1} \abs{\frac{\bl(k)}{k!}} \cdot \frac{\bl(n-k)}{(n-k!)} =
O\left(\frac{\bl(n-1)}{(n-1)!}\right).
\]

Using the chain rule on \eqref{eq:Hlbl} and implicit differentiation on \eqref{eq:cubic}, we have
\[
  \frac{\partial}{\partial y} \Hl(x,y)
    = \left.
      \frac{S(c, t)^3}{(1 + y)^2 \left( 1 - \frac{2 c S(c, t)}{1 + c} - 3 t S(c, t)^2 \right)}
    \right|_{\substack{c = e^x - 1 \hfill \\ t = 2e^{-x} - 1 - (1 + y)^{-1} \hfill}}
\]
and at $(x, y) = (0, 0)$, it follows that
\begin{align*}
  C & = \left.\frac{\partial}{\partial y}
    \Hl(x,y)\right|_{(x,y)=(0,0)}\\
  &= S(0,0)^3 = 1,
\end{align*}
where the last equality follows from \eqref{eq:cubic}. 
So, by \autoref{thm:Bender}, we have
\[
\frac{\pl(n)}{n!} = \frac{\bl(n)}{n!} +
  O\left(\frac{\bl(n-1)}{(n-1)!}\right) \sim \frac{\bl(n)}{n!}. \qedhere
\]
\end{proof}

\begin{proof}[Proof of {\hyperref[thm:asymptP]{Theorem~\ref*{thm:asymptP}(ii)}}]
Let $\Hu(x,y)$ be the formal power series in $x$ and $y$ defined by
\begin{equation}\label{eq:Hunl}
  \Hu(x,y) = S(x (1-x)^{-1}, 1-2x-(1+y)^{-1}),
\end{equation}
where $S(c,t)$ is again the formal power series solution of \eqref{eq:cubic} as defined in \autoref{thm:enum}. From Equation~\eqref{expgs}, we have that
\[
  \Hu(x, \Bu(x)-1)
    = \sum_{n\geq 0} \pu(n) x^n.
\]
Again we check the conditions of \autoref{thm:Bender}.
As in the labelled case above, from the definition of $\Hu(x,y)$ in \eqref{eq:Hunl} we
see that it is analytic so Condition 1 holds.
From \autoref{prop:labVSunlab}, we have $\bu(n) \sim
\bl(n) / n!$, and since the coefficients
$\bl(n) / n!$ satisfy Conditions 2 and 3, so do the
coefficients $\bu(n)$.

Using the chain rule on \eqref{eq:Hunl} and implicit differentiation on \eqref{eq:cubic}, we have
\[
  \frac{\partial}{\partial y} \Hu(x,y)
    = \left.
      \frac{S(c, t)^3}{(1 + y)^2 \left( 1 - \frac{2 c S(c, t)}{1 + c} - 3 t S(c, t)^2 \right)}
    \right|_{\substack{c = x (1 - x)^{-1} \hfill \\ t = 1 - 2x - (1 + y)^{-1} \hfill}}
\]
and at $(x, y) = (0, 0)$, it follows that
\[
  C = \left.\frac{\partial}{\partial y} \Hu(x,y)\right|_{(x,y)=(0,0)}
  = S(0,0)^3 = 1.
\]
So, by \autoref{thm:Bender}, we have
\[
\pu(n) = \bu(n) +
  O(\bu(n-1)) \sim \bu(n). \qedhere
\]
\end{proof}

\section{Acknowledgements}\label{sec:thanks}

This work grew out of a working session of the algebraic combinatorics group at LaCIM with active participation from Chris Berg, Franco Saliola, Luis Serrano, and the authors.
It was facilitated by computer exploration using various types of mathematical software, including Sage~\cite{sage} and its algebraic combinatorics features developed by the Sage-Combinat community~\cite{sage-combinat}.
In addition, the authors would like to thank Joel Brewster Lewis for
several conversations and
suggesting looking at asymptotics,
Mark Skandera and Yan X Zhang for helpful discussions,
and the anonymous referee for comments which led to substantial improvements of this paper.


\end{document}